\newtheorem{theorem}{Theorem}
\newtheorem{lemma}[theorem]{Lemma}
\newtheorem{corollary}[theorem]{Corollary}
\newtheorem{definition}[theorem]{Definition}
\newtheorem{remark}[theorem]{Remark}
\numberwithin{equation}{section}
\numberwithin{theorem}{section}
\newcommand{\eps}{\varepsilon}
\newcommand{\R}{\mathbb{R}}
\newcommand{\N}{\mathbb{N}}
\DeclareMathOperator{\sign}{sign}
\DeclareMathOperator{\supp}{supp}
\newcommand{\Dx}{{\Delta x}}
\newcommand{\Dy}{{\Delta y}}
\newcommand{\DxDy}{{\mathbf{\Delta}}}
\newcommand{\Z}{\mathbb{Z}}
\renewcommand{\phi}{\varphi}
\renewcommand{\leq}{\leqslant}
\renewcommand{\geq}{\geqslant}
\newcommand{\dd}{\,d}
\newcommand{\cell}{\mathcal{C}}
\newcommand{\recon}{\mathcal{R}}
\newcommand{\hf}{{\unitfrac{1}{2}}}
\newcommand{\iphf}{{i+\hf}}
\newcommand{\imhf}{{i-\hf}}
\newcommand{\jphf}{{j+\hf}}
\newcommand{\jmhf}{{j-\hf}}
\newcommand{\jmpr}[1]{\langle\hspace*{-0.25em}\langle #1 \rangle\hspace*{-0.25em}\rangle}
\newcommand{\jmp}[1]{[\hspace*{-0.15em}[#1]\hspace*{-0.15em}]}
\newcommand{\loc}{{\text{loc}}}
\newcommand{\ip}[2]{\bigl\langle #1,\,#2\bigr\rangle}
\newcommand{\avg}[1]{\{\hspace*{-0.3em}\{#1\}\hspace*{-0.3em}\}}
\begin{document}
\title[Convergence of second order methods]{Convergence of second-order, entropy stable methods for multi-dimensional conservation laws}

\author[N. Chatterjee]{Neelabja Chatterjee}
\address[Neelabja Chatterjee]
{\newline Department of mathematics
\newline University of Oslo
\newline P.O. Box 1053,  Blindern
\newline N--0316 Oslo, Norway} 
\email[]{ \newline{nelabja12@gmail.com} \\ \newline{ neelabjc@math.uio.no}}

\author[U. S. Fjordholm]{Ulrik Skre Fjordholm}
\address[Ulrik Skre Fjordholm]
{\newline Department of mathematics
\newline University of Oslo
\newline P.O. Box 1053,  Blindern
\newline N--0316 Oslo, Norway} 
\email[]{ulriksf@math.uio.no}

\date{\today}

\subjclass[2010]{Primary:  35L65, 65M12; Secondary: 65M08}

\thanks{This project has received funding from the European Union's
Horizon 2020 research and innovation programme under the Marie Sk\l{}odowska-Curie grant
agreement No 642768 for NC's work.}

\begin{abstract}
High-order accurate, \emph{entropy stable} numerical methods for hyperbolic conservation laws have attracted much interest over the last decade, but only a few rigorous convergence results are available, particularly in multiple space dimensions. In this paper we show how the entropy stability of one such method yields a (weak) bound on oscillations, and using compensated compactness we prove convergence to a weak solution satisfying at least one entropy condition.
\end{abstract}

\maketitle

{\small \textbf{Keywords:} Multi-dimensional conservation laws; TECNO scheme; entropy stability; finite volume methods}

\section{Introduction}
{Hyperbolic conservation laws} appear in a large variety of applications, including gas dynamics, traffic modeling, multi-phase fluid flow problems, and more; see e.g.~\cite{DafermosI,GodlewskiI,HR,LeVequeI}. We consider a scalar, $d$-dimensional hyperbolic conservation law
\begin{equation}\label{eq:SCLI}
\begin{cases}
\partial_tu + \nabla \cdot f(u) = 0 &\forall\ (x,t) \in \R^d \times \R_+ \\
u(x,0) = u_{0}(x) &\forall\ x \in \R^d
\end{cases}
\end{equation}
where $u = u(x,t): \R^d \times \R_+ \to \textbf{U}$ is the unknown \textit{conserved variable}, taking values in some nonempty interval $\mathbf{U}\subset\R$, and the function $f = (f_1, \dots , f_d):\textbf{U} \to \R^d$ is the smooth (at least $C^2$ on \textbf{U}) and possibly nonlinear \textit{flux function}. It is well-known that even if the initial datum $u_{0}(x)$ is arbitrarily smooth, the solutions of \eqref{eq:SCLI} may still be non-smooth \cite{DafermosI,GodlewskiI,HR}. Thus, it is fruitless to look for solutions of \eqref{eq:SCLI} in the classical sense. Instead these solutions are sought in a weak sense. A function $u \in L^\infty(\R^d \times \R_+)$ is said to be a \textit{weak solution} of \eqref{eq:SCLI} if it is a distributional solution, i.e.
\begin{equation}\label{eq:WeakFormulation}
\int_{\R^d} \int_{\R_+} \partial_tu \phi + f(u) \cdot  \nabla\phi \dd x \dd t + \int_{\R^d} u_{0}(x) \phi(x,0) \dd x = 0 \qquad \forall\ \phi \in C_c^1(\R^d \times [0, \infty)).
\end{equation}
It is well known (see e.g.~\cite{DafermosI,GodlewskiI,HR}) that weak solutions may be non-unique. Thus to single out a physically relevant solution, the notion of weak solution has to be supplemented with an additional admissibility criterion, namely \textit{entropy conditions}. A pair of functions $\eta:\R\to\R,q:\R\to\R^d$ is an \emph{entropy pair} for \eqref{eq:SCLI} if $\eta$, the \textit{entropy function}, is convex and $q$, the \textit{entropy flux function}, satisfies $q'(u) = \eta'(u)f'(u)$. In particular, for every $k\in\R$ we have the well-known \emph{Kru\v{z}kov entropy pair} $(\eta_k, q_k)$ given by
\begin{equation}\label{eq:KruzkovEntropy}
\eta_k(u) := |u - k|, \qquad q_k(u) := \sign(u - k) (f(u) - f(k)),
\end{equation}
see \cite{Kru70}. Multiplying \eqref{eq:SCLI} by $\eta'(u)$ and using the chain rule we have the following \textit{entropy conservation identity} for smooth solutions of \eqref{eq:SCLI},
\begin{equation}\label{eq:ContiEntropyConserveIdentity}
\partial_t\eta(u) + \nabla \cdot q(u) = 0.
\end{equation}
Due to the possible {non}-smoothness of solutions of \eqref{eq:SCLI}, the above derivation cannot be rigorously justified for weak solutions. Instead, the \textit{entropy inequality}
\begin{equation}\label{eq:ContiEntropyInequality}
\partial_t\eta(u) + \nabla \cdot q(u) \leq 0
\end{equation}
is imposed. As was shown by Kru\v{z}kov \cite{Kru70}, this \emph{entropy condition} guarantees uniqueness and stability of solutions of \eqref{eq:SCLI}.

\subsection{Numerical methods for conservation laws}
The nonlinear nature of the equation \eqref{eq:SCLI} and the fact that its solutions are irregular, can make the construction and analysis of numerical methods for \eqref{eq:SCLI} challenging. We outline here some of the available literature on this subject.

In order to converge to a weak solution satisfying the entropy condition \eqref{eq:ContiEntropyInequality}, the numerical method needs to satisfy some discrete version of the entropy condition. Finite volume methods (to be discussed in subsection \ref{subsec:FVM} in detail) with this property are called \emph{entropy stable}. Harten, Hyman and Lax \cite{HartenIII} showed that all monotone schemes for scalar conservation laws are entropy stable with respect to any entropy pair $(\eta,q)$. Osher \cite{OsherI} generalized this to a (presumably) larger class of schemes, the so-called \emph{E-schemes}. Osher also showed that these E-schemes are at most first-order accurate. In his papers from $1984$ \cite{TadmorI} and from $1987$ \cite{TadmorII}, Tadmor laid a general framework for constructing entropy stable schemes by first constructing \emph{entropy conservative} methods -- schemes satisfying a discrete version of \eqref{eq:ContiEntropyConserveIdentity} -- and then adding numerical diffusion to obtain entropy stability. As he proved in \cite{TadmorII}, entropy conservative schemes are generally second-order accurate; even higher-order entropy conservative schemes were constructed by Lefloch, Mercier and Rohde in \cite{LeflochI}. However, the addition of numerical diffusion to any of these entropy conservative schemes, in the way suggested in \cite{TadmorI,TadmorII}, degrades the order of accuracy to 1.

By combining the high-order accurate entropy conservative schemes in \cite{TadmorII,LeflochI} with a judiciously chosen \emph{reconstruction method}, Fjordholm, Mishra and Tadmor \cite{UlrikII,UlrikIII} constructed entropy stable methods with an arbitrarily high order of accuracy, the so-called \emph{TECNO schemes}. By estimating the amount of entropy dissipated by the method (\textit{i.e.}, the amplitude of the left-hand side in \eqref{eq:ContiEntropyInequality}), the authors could derive \emph{a priori} weak regularity bounds on the numerical solution, and these bounds were sufficient to prove convergence of the method in the special case of $d=1$ space dimensions.

To the best of our knowledge there is no available proof of convergence of a high-order accurate entropy stable method for a multi-dimensional conservation law. The purpose of the present paper is to prove convergence for a particular case, namely the second-order TECNO scheme.

\section{Entropy stable numerical methods}

\subsection{Finite volume methods}\label{subsec:FVM}
For the sake of notational simplicity we are going to consider the scalar conservation law \eqref{eq:SCLI} in the particular case of $d=2$ space dimensions, although we emphasize that the results in this paper are equally valid for any number of spatial dimensions $d$. 

We write \eqref{eq:SCLI} in the case $d=2$ as
\begin{equation}\label{eq:SCLII}
\begin{cases}
\partial_tu + \partial_xf^x(u) + \partial_y f^y(u) = 0 & \forall\ (x,y,t) \in \R^2 \times \R_+ \\
u(x,y,0) = u_{0}(x,y) & \forall\ (x,y) \in \R^2.
\end{cases}
\end{equation}
Here and in the remainder we will denote the components of all vector-valued functions by $f=(f^x,f^y)$.

One of the most popular discretization frameworks is \textit{finite volume methods}. The spatial domain $\R^2$ is partitioned into rectangles of the form $\cell_{i,j} = [x_\imhf,x_\iphf) \times [y_{\jmhf},y_\jphf)$, where we for the sake of simplicity use uniform grid sizes $x_\iphf - x_\imhf \equiv \Dx$ and $y_\jphf - y_\jmhf \equiv \Dy$. We denote the midpoint values as $x_i = \frac{x_\imhf + x_\iphf}{2}$ and $y_j = \frac{y_\jmhf + y_\jphf}{2}$. For any quantity $(u_{i,j})_{i,j\in\Z}$ defined on this grid, we define the jump and average operators
\begin{align*}
\avg{u}_{\iphf,j} = \frac{u_{i,j} + u_{i+1,j}}{2} && \jmp{u}_{\iphf,j} := u_{i+1,j} - u_{i,j} \\
\avg{u}_{i,\jphf} = \frac{u_{i,j} + u_{i,j+1}}{2} && \jmp{u}_{i,\jphf} := u_{i,j+1} - u_{i,j}.
\end{align*}
We let $u_{i,j}(t)$ be an approximation of the average value of $u$ over the rectangles $\cell_{i,j}$, that is,
\[
u_{i,j}(t) \approx \frac{1}{\Dx\Dy} \int_{\cell_{i,j}}u(x,y,t) \,d(x,y).
\]
The initial data is sampled as $u_{i,j}(0) = \frac{1}{\Dx\Dy} \int_{\cell_{i,j}}u_{0}(x,y)\,d(x,y)$. A semi-discrete finite volume method for \eqref{eq:SCLII} can then be written in the generic form
\begin{equation}\label{eq:SCLFVMI}
\frac{d}{dt}u_{i,j}(t) + \frac{F^x_{\iphf,j} - F^x_{\imhf,j}}{\Dx} + \frac{F^y_{i,\jphf} - F^y_{i,\jmhf}}{\Dy} = 0,
\end{equation}
where the numerical flux function $F^x_{\iphf,j} = F^x(u_{i,j},u_{i+1,j})$ is computed from an approximate solution of the Riemann problem at the interface $\{(x_\iphf, y)\}_{y_\jmhf\leq y \leq y_\jphf}$ and $F^y_{i,\jphf}$ is computed analogously \cite{GodlewskiI,HR}. The computed solution generated by the scheme is given by $u^\DxDy(x,y,t) = \sum_{i,j} u_{i,j}(t) \chi_{\cell_{i,j}}(x,y)$, where $\DxDy=(\Dx,\Dy)$ and $\chi_{\cell}$ is characteristic function for the rectangle $\cell$. We say that the numerical flux function $F$ is \emph{consistent with $f$} if $F^x(u, u) = f^x(u)$ and $F^y(u, u) = f^y(u)$ for all $u \in \mathbf{U}$. We also say that a numerical flux $F$ is \emph{locally Lipschitz continuous} if $F^x,F^y$ are locally Lipschitz continuous in each argument.

\subsection{Entropy stable numerical methods}\label{sec:entropystableschemes}

In order for any limit $u = \lim_{\DxDy\to0}u^\DxDy$ to satisfy the entropy condition \eqref{eq:ContiEntropyInequality}, the numerical method \eqref{eq:SCLFVMI} must satisfy some discrete form of the entropy condition. In this section we briefly review the theory of so-called \emph{entropy conservative} and \emph{entropy stable} schemes, and we define the \emph{TECNO} schemes, which will be the subject of the rest of the paper.

\begin{definition}[Entropy conservative methods]\label{def:entrcons}
Let $(\eta,q)$ be an entropy pair. We say that the finite volume method \eqref{eq:SCLFVMI} is \emph{entropy conservative} (with respect to $(\eta,q)$) if computed solutions satisfy the discrete entropy equality
\begin{equation}\label{eq:SCLFVMentropyconsvI}
\frac{d}{dt}\eta(u_{i,j}) + \frac{Q^x_{\iphf,j} - Q^x_{\imhf,j}}{\Dx} + \frac{Q^y_{i,\jphf} - Q^y_{i,\jmhf}}{\Dy} = 0,
\end{equation}
where $Q^x_{\iphf,j} = Q^x(u_{i,j},u_{i+1,j})$ and $Q^y_{i,\jphf} = Q^y(u_{i,j},u_{i,j+1})$ are \emph{numerical entropy flux functions} satisfying $Q^x(u,u) = q^x(u)$ and $Q^y(u,u) = q^y(u)$ for all $u\in\mathbf{U}$.
\end{definition}

\begin{definition}[Entropy stable methods]\label{def:entrstab}
Let $(\eta,q)$ be an entropy pair. We say that the finite volume method \eqref{eq:SCLFVMI} is \emph{entropy stable} (with respect to $(\eta,q)$) if computed solutions satisfy the discrete entropy equality
\begin{equation}\label{eq:SCLFVMentropystableI}
\frac{d}{dt}\eta(u_{i,j}) + \frac{Q^x_{\iphf,j} - Q^x_{\imhf,j}}{\Dx} + \frac{Q^y_{i,\jphf} - Q^y_{i,\jmhf}}{\Dy} \leq 0,
\end{equation}
where $Q^x_{\iphf,j} = Q^x(u_{i,j},u_{i+1,j})$ and $Q^y_{i,\jphf} = Q^y(u_{i,j},u_{i,j+1})$ are \emph{numerical entropy flux functions} satisfying $Q^x(u,u) = q^x(u)$ and $Q^y(u,u) = q^y(u)$ for all $u\in\mathbf{U}$.
\end{definition}

For an entropy pair $(\eta,q)$ the mapping $u \mapsto \eta'(u)$ is of particular importance, and we denote this \emph{entropy variable} by $v=v(u):=\eta'(u)$. If $\eta$ is strictly convex, $\eta''(u) > 0$, then the map $u \mapsto v(u)$ is strictly monotone increasing and hence is invertible. This inverse will be denoted by $u(v)$. Thus, the mapping $u \mapsto v$ induces a change of variables, in terms of which we can pose the conservation law \eqref{eq:SCLFVMI} as
\begin{equation}
\partial_tu(v) + \nabla \cdot f(u(v)) = 0.
\end{equation}
We define also the \textit{entropy potential} $\psi:\mathbf{U}\to\R^d$ defined by $\psi(u) := v(u)f(u) - q(u)$, whose name is given by the fact that $\partial_v \psi(u(v)) = f(u(v))$. 

A general approach to designing entropy conservative/stable schemes is as follows. Multiplying both sides of \eqref{eq:SCLFVMI} by $v_{i,j}:=\eta'(u_{i,j})$ and using the chain rule we get 
\[
\frac{d}{dt}\eta(u_{i,j}) + v_{i,j}\frac{F^x_{\iphf,j} - F^x_{\imhf,j}}{\Dx} + v_{i,j} \frac{F^y_{i,\jphf} - F^y_{i,\jmhf}}{\Dy} = 0.
\]
Adding and subtracting terms yields
\begin{equation}\label{eq:SCLFVMentropy}
\begin{split}
\frac{d}{dt}\eta(u_{i,j}) + \frac{Q^x_{\iphf,j} - Q^x_{\imhf,j}}{\Dx} + \frac{Q^y_{i,\jphf} - Q^y_{i,\jmhf}}{\Dy} \\ 
= \frac{r^{x}_{\iphf,j} + r^{x}_{\imhf,j}}{2\Dx} + \frac{r^{y}_{i,\jphf} + r^{y}_{i,\jmhf}}{2\Dy}
\end{split}
\end{equation}
where
\begin{equation}\label{eq:entropyresidualflux}
\begin{split}
r^{x}_{\iphf,j} &= \jmp{v}_{\iphf,j} F^x_{\iphf,j} - \jmp{\psi^{x}}_{\iphf,j}, \\
r^{y}_{i,\jphf} &= \jmp{v}_{i,\jphf} F^y_{i,\jphf} - \jmp{\psi^{y}}_{i,\jphf} \\
Q_{\iphf,j}^x &= \avg{v}_{\iphf,j} F^x_{\iphf,j} - \avg{\psi^{x}}_{\iphf,j}, \\
Q_{i,\jphf}^y &= \avg{v}_{i,\jphf} F^y_{i,\jphf} - \avg{\psi^{y}}_{i,\jphf}.
\end{split}
\end{equation}
It is straightforward to see that $Q^x, Q^y$ are consistent with $q$ in the sense of Definitions \ref{def:entrcons} and \ref{def:entrstab}, as long as $F^x,F^y$ are consistent with $f$. Thus, if $F^x,F^y$ are chosen such that either $r\equiv0$ or $r\leq0$, then the scheme \eqref{eq:SCLFVMI} is entropy conservative/stable. In particular, if $F^x,F^y$ are of the form
\begin{equation}\label{eq:entrstabflux}
F^x_{\iphf,j}=\tilde{F}^x_{\iphf,j} - D^{x}_{\iphf,j}\jmp{v}_{\iphf,j}, \qquad F^y_{i,\jphf}=\tilde{F}^y_{i,\jphf} - D^{y}_{i,\jphf}\jmp{v}_{i,\jphf}
\end{equation}
for nonnegative coefficients $D^{x}, D^{y}\geq0$ and numerical fluxes $\tilde{F}^x, \tilde{F}^y$ satisfying
\begin{equation}\label{eq:EntropyConsTadmor}
\jmp{v}_{\iphf,j} \tilde{F}^x_{\iphf,j} = \jmp{\psi^x}_{\iphf,j}, \qquad \jmp{v}_{i,\jphf} \tilde{F}^y_{i,\jphf} = \jmp{\psi^y}_{i,\jphf}
\end{equation}
then the resulting scheme \eqref{eq:SCLFVMI} is entropy stable. These observations were first made by Tadmor \cite{TadmorI,TadmorII}; see also \cite{TadmorIII}. For fluxes of the form \eqref{eq:entrstabflux} we also get a precise expression for the amount of entropy dissipated in \eqref{eq:SCLFVMentropystableI}:
\begin{align*}
\frac{d}{dt}\eta(u_{i,j}) + \frac{Q^x_{\iphf,j} - Q^x_{\imhf,j}}{\Dx} + \frac{Q^y_{i,\jphf} - Q^y_{i,\jmhf}}{\Dy} \\ 
= \frac{D^{x}_{\iphf,j}\jmp{v}_{\iphf,j}^2+D^{x}_{\imhf,j}\jmp{v}_{\imhf,j}^2}{2\Dx} + \frac{D^{y}_{i,\jphf}\jmp{v}_{i,\jphf}^2+D^{y}_{i,\jmhf}\jmp{v}_{i,\jmhf}^2}{2\Dy}.
\end{align*}
Under further assumptions on $\eta$ and $D$, this yields explicit bounds on ``weak TV'' terms of the form $\sum_{i,j}\jmp{v}_{\iphf,j}^2\Dy$, which can be used to prove compactness and convergence of the numerical method; see e.g.~\cite{UlrikIII}. We will apply this approach in Section \ref{sec:convergence}.

\begin{remark}
The above observations can be used to \emph{design} entropy stable schemes, by first finding numerical fluxes $\tilde{F}^x,\tilde{F}^y$ satisfying \eqref{eq:EntropyConsTadmor}, and then adding diffusion in the form \eqref{eq:entrstabflux}. We note that with this approach, we are only guaranteed that the discrete entropy inequality \eqref{eq:SCLFVMentropystableI} (or \eqref{eq:SCLFVMentropyconsvI} for entropy conservative schemes) is satisfied for \emph{one particular} entropy pair $(\eta,q)$.
\end{remark}

\subsection{The TECNO scheme}\label{subsec:TECNO}
The scheme \eqref{eq:SCLFVMI} with fluxes $\tilde{F}^x,\tilde{F}^y$ satisfying \eqref{eq:EntropyConsTadmor} is entropy conservative, in the sense of Definition \ref{def:entrcons}. It can be shown that two-point entropy conservative schemes are at most second-order accurate, in the sense of truncation error \cite{TadmorI, TadmorII}. When adding diffusion in the form \eqref{eq:entrstabflux} with $D=O(1)$, the resulting scheme is at most \emph{first-order} accurate. The TECNO schemes, introduced in \cite{UlrikIII,UlrikII}, represent a systematic approach to designing higher-order accurate entropy stable schemes. Since the convergence proof in Section \ref{sec:convergence} only applies to the second-order TECNO schemes, we will only describe these methods here, and we refer to \cite{UlrikIII,UlrikII} for the general construction.

The TECNO scheme has two main ingredients: An entropy conservative flux $\tilde{F}^x,\tilde{F}^y$, and a \emph{sign preserving reconstruction method}. Since our mesh is a Cartesian grid, we define the reconstruction procedure in a tensorial manner. For a partition $\{\cell_i\}_{i\in\Z}$ of $\R$ we consider a $p$th order reconstruction operator $\recon$, mapping any grid function $(w_i)_{i\in\Z}$ to a piecewise $(p-1)$th order polynomial $\recon w (x)$. Multi-dimensional grid functions $(w_{i,j})_{i,j\in\Z}$ are reconstructed dimension-by-dimension, defining in particular the edge values
\begin{subequations}\label{eq:Reconstruction}
\begin{equation}
\begin{split}
w^\pm_{\iphf,j} = \recon w_{\cdot,j}(x_\iphf\pm0), \qquad w^\pm_{i,\jphf} = \recon w_{i,\cdot}(y_\jphf\pm0)
\end{split}
\end{equation}
(where we by ``$+0$'' and ``$-0$'' mean right and left limits, respectively). We define also the edge jumps
\begin{equation}
\jmpr{w}_{\iphf,j} = w^+_{\iphf,j} - w^-_{\iphf,j}, \qquad \jmpr{w}_{i,\jphf} = w^+_{i,\jphf} - w^-_{i,\jphf}.
\end{equation}
\end{subequations}
Fix now some entropy pair $(\tilde{\eta},\tilde{q})$. The second-order TECNO scheme \cite{UlrikII,UlrikIII} is constructed from a flux $\tilde{F}^x,\tilde{F}^y$ which is entropy conservative with respect to $(\tilde{\eta},\tilde{q})$, and applies a second-order reconstruction method to the entropy variables $\tilde{v}=\tilde{\eta}\circ u$. The resulting scheme \eqref{eq:SCLFVMI} has numerical flux
\begin{equation}\label{eq:TECNOFlux}
\begin{split}
F^{x}_{\iphf,j} = \tilde{F}_{\iphf,j} - D^{x}_{\iphf,j} \jmpr{\tilde{v}}_{\iphf,j}, \\
F^{y}_{i,\jphf} = \tilde{F}_{i,\jphf} - D^{y}_{i,\jphf} \jmpr{\tilde{v}}_{i,\jphf},
\end{split}
\end{equation}
where $D^{x}, D^{y} \geq 0$. As shown in \cite{UlrikII}, the above scheme is formally second-order accurate, and it satisfies the discrete entropy inequality \eqref{eq:SCLFVMentropystableI} for the entropy pair $(\tilde{\eta},\tilde{q})$, provided the reconstruction operator $\recon$ satisfies the \emph{sign property}
\[
\jmp{\tilde{v}}_{\iphf,j}\jmpr{\tilde{v}}_{\iphf,j} \geq 0, \qquad \jmp{\tilde{v}}_{i,\jphf}\jmpr{\tilde{v}}_{i,\jphf} \geq 0.
\]
This is indeed true for the \emph{ENO reconstruction method} \cite{HartenI}:
\begin{theorem}[The ENO sign property \cite{UlrikIV}]\label{thm:ENOstability}
For some $p\in\N$, let $\recon$ denote the $p$-th order ENO reconstruction operator. Then for any grid function $(w_i)_{i\in\Z}$,
\begin{equation}\label{eq:ENOsignproperty}
\sign\jmpr{w}_\iphf = \sign\jmp{w}_\iphf.
\end{equation}
Moreover, there exists a constant $C_p>0$ depending only on $p$ such that
\begin{equation}\label{eq:ENOupperbound}
\frac{\jmpr{w}_\iphf}{\jmp{w}_\iphf} \leq C_p.
\end{equation}
\end{theorem}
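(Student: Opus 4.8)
The plan is to reduce the claim to a statement about ENO \emph{interpolation} and to prove that by induction on the polynomial degree, exploiting the greedy and hierarchical way in which ENO builds its stencils. First I would pass to the primitive $W$, the grid function determined by $W_\iphf - W_\imhf = \Dx\,w_i$: in the standard Harten construction the $p$-th order ENO reconstruction of $(w_i)$ in a cell is precisely the derivative of the degree-$p$ ENO interpolant of the point values $(W_\iphf)$ restricted to that cell, and the ENO stencil selection is consistent under this correspondence since divided differences of $W$ reduce to divided differences of $(w_i)$. Under this reduction $\jmp{w}_\iphf = w_{i+1}-w_i$ is, up to the factor $1/\Dx$, the second difference $W_{i+3/2}-2W_\iphf+W_\imhf$, whereas $\jmpr{w}_\iphf = w^+_\iphf-w^-_\iphf$ becomes the jump at $x_\iphf$ between the derivatives of the interpolants attached to cells $i$ and $i+1$. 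Since both interpolants pass through $W_\iphf$, their difference vanishes there, and the whole question becomes one of controlling the sign and magnitude of this derivative-jump in terms of a single second difference of $W$.

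The engine is the Newton divided-difference form of the interpolant together with two structural facts about ENO. First, the degree-$r$ ENO stencil is obtained from the degree-$(r-1)$ stencil by adjoining a single neighbouring node, so the interpolants form a hierarchy $q^1_i,\dots,q^p_i$ with $q^r_i = q^{r-1}_i + W[S^r_i]\prod_{m\in S^{r-1}_i}(x-x_m)$, where $S^r_i$ denotes the order-$r$ stencil. Second, the defining ENO criterion adjoins at each step the node producing the divided difference that is \emph{smaller in absolute value}. I would begin with the base case $r=1$, where the reconstruction is piecewise constant and $\jmpr{w}_\iphf = \jmp{w}_\iphf$ exactly, so the constant is $1$. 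The case $r=2$ already exhibits the mechanism: there $\jmpr{w}_\iphf = \jmp{w}_\iphf - \tfrac12(a+b)$, with $a,b$ the ENO-chosen one-sided differences on the two sides of the interface, each of magnitude at most $\abs{\jmp{w}_\iphf}$; hence $\jmpr{w}_\iphf$ retains the sign of $\jmp{w}_\iphf$ and $\jmpr{w}_\iphf/\jmp{w}_\iphf\le 2$.

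For the inductive step I would express the interface jump of the degree-$r$ interpolant as that of degree $r-1$ plus a correction arising from the two nodes newly adjoined on either side of $x_\iphf$; by the recursion above this correction is a product of the newly selected divided differences with node-distance factors evaluated at $x_\iphf$. The task is then to show, using the ENO smaller-in-absolute-value rule, that each of these divided differences is dominated by the second difference $W_{i+3/2}-2W_\iphf+W_\imhf$ that governs $\jmp{w}_\iphf$, so that no correction can reverse the sign already present and each contributes only a bounded multiple of $\jmp{w}_\iphf$; summing these bounds over $r=1,\dots,p$ would yield the sign identity together with an explicit $C_p$. The main obstacle is the interaction at the interface between the two \emph{distinct} stencils selected for cells $i$ and $i+1$: within a single cell the stencils are nested across orders, but the stencils of neighbouring cells need not coincide, and one must verify that the greedy choices made on the two sides are compatible enough that all the signed divided differences entering the corrections can be bounded by the \emph{same} second difference of $W$. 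Establishing this comparison across the two stencils, and propagating it cleanly through the induction so as to obtain simultaneously the sign and the uniform bound $C_p$, is the crux of the argument.
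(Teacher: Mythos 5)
First, a point of reference: the paper does not prove Theorem \ref{thm:ENOstability} at all --- it is quoted verbatim from \cite{UlrikIV}, so your proposal must be measured against the proof given there. Your setup is in fact the same as in that reference: pass to the primitive $W$ (so that ENO reconstruction of $(w_i)$ is the derivative of the ENO interpolant of the point values $W_\iphf$, and $\jmp{w}_\iphf$ is $1/\Dx$ times a second difference of $W$), write the interpolants in hierarchical Newton form, and induct on the order using the ENO rule of selecting the divided difference smaller in absolute value. Your base cases are correct: for $p=1$ the ratio is $1$, and for $p=2$ the identity $\jmpr{w}_\iphf=\jmp{w}_\iphf-\tfrac12(\delta_i+\delta_{i+1})$ with $|\delta_i|,|\delta_{i+1}|\leq|\jmp{w}_\iphf|$ (since $\jmp{w}_\iphf$ is itself one of the two ENO candidates in each of the cells $i$ and $i+1$) gives both the sign property and $C_2=2$.

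However, the inductive step --- which is the entire content of the theorem for $p\geq 3$ --- is not carried out, and the mechanism you propose for it is false. You want to show that each newly selected divided difference entering the order-$r$ correction is ``dominated by the second difference $W_{i+\thf}-2W_\iphf+W_\imhf$,'' i.e.\ by $\jmp{w}_\iphf$, so that no single correction can flip the sign and each contributes $O(\jmp{w}_\iphf)$. This term-by-term domination only happens at order $2$, where $\jmp{w}_\iphf$ is literally one of the two sibling candidates; the greedy ENO rule compares a candidate only against its sibling at the same stage, never against the low-order interface difference. Concretely, take $w_{i+1}-w_i=\eps$ small but with $O(1)$ second differences on both candidate stencils (e.g.\ $w_{i-1}=-1$, $w_i=0$, $w_{i+1}=\eps$, $w_{i+2}=1$): the stage-$3$ divided differences selected on either side are of size $O(1)\gg\eps$, so the individual corrections are not bounded by any multiple of $\jmp{w}_\iphf$, yet the theorem asserts $0\leq\jmpr{w}_\iphf\leq C_3\,\eps$. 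The bound therefore cannot come from majorizing corrections one at a time; it comes from \emph{cancellation between the two cells' corrections} --- in the extreme case the cells $i$ and $i+1$ select the same stencil and the two polynomials coincide, making the jump exactly zero --- and in general from chaining the ENO selection inequalities \emph{across} the interface according to how the two stencils overlap. This cross-stencil case analysis, which you correctly flag as the crux but leave open, is precisely the technical heart of the proof in \cite{UlrikIV}; without it your outline establishes the theorem only for $p\leq 2$.
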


For the sake of simplicity we henceforth select the entropy for which TECNO is entropy stable as $\tilde{\eta}(u)=u^2/2$. The corresponding entropy variable is then simply $\tilde{v}=u$, making the mapping between conserved and entropy variables somewhat easier. A summary of the TECNO scheme that we will analyze in this paper follows:

\begin{definition}\label{def:tecno}
The \emph{second-order TECNO scheme} for \eqref{eq:SCLII} is the numerical scheme \eqref{eq:SCLFVMI} with the numerical flux
\begin{equation}\label{eq:tecnoflux}
\begin{split}
F^{x}_{\iphf,j} = \tilde{F}_{\iphf,j} - D^{x}_{\iphf,j} \jmpr{u}_{\iphf,j}, \\
F^{y}_{i,\jphf} = \tilde{F}_{i,\jphf} - D^{y}_{i,\jphf} \jmpr{u}_{i,\jphf},
\end{split}
\end{equation}
where 
\begin{itemize}
\item $\tilde{F}$ is a consistent and locally Lipschitz continuous numerical flux which is entropy conservative with respect to the entropy $\tilde{\eta}(u)=u^2/2$
\item the diffusion coefficients $D$ satisfy
\[
\underline{D} \leq D^{x}_{\iphf,j},\, D^{y}_{i,\jphf} \leq \overline{D} \qquad \text{for fixed } \underline{D}, \overline{D}>0
\]
\item $\jmpr{u}_{\iphf,j}$ and $\jmpr{u}_{i,\jphf}$ denote the jumps in the second-order ENO reconstruction of the conserved variable $(u_{i,j}(t))_{i,j\in\Z}$.
\end{itemize}
\end{definition}

\begin{theorem}\label{thm:tecnoproperties}
The second-order TECNO scheme (cf.~Definition \ref{def:tecno}) has the following properties:
\begin{enumerate}[label=(\roman*)]
\item it is entropy stable with respect to the square entropy $\tilde{\eta}(u)=u^2/2$
\item the flux $F$ is locally Lipschitz continuous
\item there is some $C>0$ independent of $\Dx,\Dy$ such that
\begin{subequations}\label{eq:weakBVbound}
\begin{equation}\label{eq:weakBVboundu}
\int_0^T \sum_{i,j}\Bigl(\bigl|\jmp{u}_{\iphf,j}\bigr|^3\Dy + \bigl|\jmp{u}_{i,\jphf}\bigr|^3\Dx\Bigr) \,dt \leq C\|u\|_{L^\infty(\R^2\times[0,T])},
\end{equation}
\begin{equation}\label{eq:weakBVboundv}
\int_0^T \sum_{i,j} \Big(\jmp{u}_{\iphf,j}\jmpr{u}_{\iphf,j} \Dy + \jmp{u}_{i,\jphf}\jmpr{u}_{i,\jphf} \Dx \Big)\,dt \leq C.
\end{equation}
\end{subequations}
\end{enumerate}
\end{theorem}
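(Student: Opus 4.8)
The plan is to derive all three claims from the entropy balance \eqref{eq:SCLFVMentropy}--\eqref{eq:entropyresidualflux}, specialised to the square entropy $\tilde\eta(u)=u^2/2$, whose entropy variable is simply $v=u$. First I would compute the entropy residual of the TECNO flux \eqref{eq:tecnoflux}. Inserting $F^x_{\iphf,j}=\tilde F_{\iphf,j}-D^x_{\iphf,j}\jmpr{u}_{\iphf,j}$ into the definition of $r^x$ in \eqref{eq:entropyresidualflux}, and using that $\tilde F$ is entropy conservative for $\tilde\eta$ (so that $\jmp{u}_{\iphf,j}\tilde F_{\iphf,j}=\jmp{\psi^x}_{\iphf,j}$ when $v=u$), the conservative contributions $\jmp{u}\tilde F$ and $\jmp{\psi^x}$ cancel and one is left with
\[
r^x_{\iphf,j}=-D^x_{\iphf,j}\,\jmp{u}_{\iphf,j}\jmpr{u}_{\iphf,j},\qquad r^y_{i,\jphf}=-D^y_{i,\jphf}\,\jmp{u}_{i,\jphf}\jmpr{u}_{i,\jphf}.
\]

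For (i) and (ii) the remaining work is routine. The ENO sign property (Theorem~\ref{thm:ENOstability}) gives $\jmp{u}\jmpr{u}=|\jmp{u}|\,|\jmpr{u}|\geq0$, and since $D^x,D^y\geq0$ we obtain $r^x,r^y\leq0$; substituting into \eqref{eq:SCLFVMentropy} yields \eqref{eq:SCLFVMentropystableI}, which is (i). For (ii), $\tilde F$ is locally Lipschitz by hypothesis and $D$ is bounded, so it suffices that the reconstructed jump $\jmpr{u}$ depends Lipschitz-continuously on the cell values; since the second-order reconstruction selects on each cell the smaller-in-modulus of its two neighbouring differences, $\jmpr{u}$ is a selection among affine functions of the $u_{i,j}$ with uniformly bounded gradients, giving a uniform local Lipschitz constant on each region of constant stencil choice (a tie-breaking convention handles the nowhere-dense set of stencil ties). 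To obtain \eqref{eq:weakBVboundv} I would multiply \eqref{eq:SCLFVMentropy} by $\Dx\Dy$ and sum over $i,j$; assuming the grid function decays at infinity the numerical entropy fluxes telescope and vanish, leaving $\tfrac{d}{dt}\sum_{i,j}\tilde\eta(u_{i,j})\Dx\Dy=\sum_{i,j}\bigl(r^x_{\iphf,j}\Dy+r^y_{i,\jphf}\Dx\bigr)$. Inserting the residual, integrating over $[0,T]$, and using $\tilde\eta\geq0$ together with $D\geq\underline{D}$ gives \eqref{eq:weakBVboundv} with $C=\tfrac1{\underline{D}}\sum_{i,j}\tilde\eta(u_{i,j}(0))\Dx\Dy\leq\tfrac1{2\underline{D}}\norm{u_0}_{L^2}^2$, independent of the grid.

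The crux is \eqref{eq:weakBVboundu}. Working one grid row at a time, write $b_i=\jmp{u}_{\iphf,j}$; the heart of the matter is the purely discrete inequality
\[
\sum_i|b_i|^3\leq C\norm{u}_{L^\infty}\sum_i b_i\jmpr{u}_{\iphf,j},
\]
after which multiplying by $\Dy$, summing over $j$, treating the columns symmetrically, and invoking \eqref{eq:weakBVboundv} with $D\geq\underline{D}$ produces \eqref{eq:weakBVboundu}. I would prove the displayed bound by summation by parts: since $|b_i|^3=b_i|b_i|\,(u_{i+1,j}-u_{i,j})$,
\[
\sum_i|b_i|^3=-\sum_i\bigl(b_i|b_i|-b_{i-1}|b_{i-1}|\bigr)u_{i,j}\leq\norm{u}_{L^\infty}\sum_i\bigl|b_i|b_i|-b_{i-1}|b_{i-1}|\bigr|,
\]
which is exactly where the factor $\norm{u}_{L^\infty}$ enters. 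It then remains to establish the local comparison $\sum_i\bigl|b_i|b_i|-b_{i-1}|b_{i-1}|\bigr|\leq C\sum_i b_i\jmpr{u}_{\iphf,j}$, using the explicit second-order formula $\jmpr{u}_{\iphf,j}=b_i-\tfrac12(s_i+s_{i+1})$ with $s_i$ the smaller-in-modulus of $b_{i-1},b_i$, together with the sign property.

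This last inequality is the main obstacle, and it is more subtle than it looks. The effective viscosity felt by the square entropy, $\jmpr{u}/\jmp{u}\in[0,C_p]$, \emph{degenerates to zero} on smooth monotone data (the reconstruction is then continuous across interfaces), so the entropy dissipation does \emph{not} control the quadratic oscillation $\sum\jmp{u}^2$; only the cubic quantity survives, precisely because the summation by parts trades one power of $\jmp{u}$ for a factor $\norm{u}_{L^\infty}$. Verifying the comparison therefore requires a sign-by-sign analysis of the ENO stencil selection: on same-sign (monotone) stretches the left-hand side reduces to $(|b_i|+|b_{i-1}|)\bigl||b_i|-|b_{i-1}|\bigr|$ while the dissipation reduces to $\tfrac12\max(|b_{i-1}|,|b_i|)\bigl||b_i|-|b_{i-1}|\bigr|$, whereas across a sign change (a local extremum) the reconstruction is maximally dissipative and both sides are comparable to $|b_i|^2+|b_{i-1}|^2$. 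Making the extremum estimate rigorous—quantifying that the reconstructed jump cannot cancel the physical jump when the neighbouring slopes point in opposite directions—is the delicate step on which the whole weak-BV bound rests.
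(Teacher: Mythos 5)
Your derivation of the residual $r^x_{\iphf,j}=-D^x_{\iphf,j}\jmp{u}_{\iphf,j}\jmpr{u}_{\iphf,j}$, the proof of (i) via the sign property, and the proof of \eqref{eq:weakBVboundv} by telescoping the entropy balance all coincide with the paper's argument (your constant $C=\frac{1}{2\underline{D}}\norm{u_0}_{L^2}^2$ is the correct one; the paper's ``$\underline{D}\mathcal{E}$'' is evidently a typo for $\mathcal{E}/\underline{D}$). Your argument for (ii), however, does not work: the selection ``smaller-in-modulus of $b_{i-1},b_i$'' is \emph{discontinuous} at ties with opposite signs, $b_{i-1}=-b_i\neq0$, where the chosen slope jumps by $2|b_i|$; a function that is affine on each region of constant stencil choice but jumps across the tie set is not Lipschitz, and no tie-breaking convention can repair this, so your ``selection among affine functions'' argument fails exactly where the stencil switches. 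The paper instead gets (ii) from the sign property \eqref{eq:ENOsignproperty} combined with the ratio bound \eqref{eq:ENOupperbound}, which give $|\jmpr{u}_{\iphf,j}|\leq C_2|\jmp{u}_{\iphf,j}|$ \emph{uniformly over all stencil selections}; it is this jump bound (not pointwise Lipschitz dependence of $\jmpr{u}$ on the cell values) that is used downstream, e.g.\ in the bounds of the form $|Q^x-q^x(u_{i,j})|\leq C|\jmp{u}|$ in Lemma \ref{lm:precompactness} and in the estimates of $B^1,B^2$ in Lemma \ref{lem:LaxWendroff}.

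The decisive gap is \eqref{eq:weakBVboundu}. Your Abel-summation reduction $\sum_i|b_i|^3\leq\norm{u}_{l^\infty}\sum_i\bigl|b_i|b_i|-b_{i-1}|b_{i-1}|\bigr|$ is indeed the natural opening move, but the remaining discrete comparison is precisely the inequality \eqref{eq:enoconjecture}, which the paper does not reprove: it is quoted as an established result from \cite[Section 4.4]{Fjo16} (proved there for $p=2$; for $p>2$ it is the open ENO conjecture). You leave this step unproven, and the interface-by-interface case analysis you sketch is false as stated: at an interior slope minimum with all differences of one sign, say $b_{i-1}=b_{i+1}=2$ and $b_i=1$, ENO selects $s_i=s_{i+1}=b_i$, so $\jmpr{u}_{\iphf,j}=b_i-\frac12(s_i+s_{i+1})=0$ and the dissipation at that interface vanishes, while $\bigl|b_i|b_i|-b_{i-1}|b_{i-1}|\bigr|=3\neq0$; in particular your same-sign reduction of the dissipation to $\frac12\max(|b_{i-1}|,|b_i|)\bigl||b_i|-|b_{i-1}|\bigr|$ fails there (and is already wrong in form on decreasing stretches, where the dissipation at $\iphf$ involves $b_{i+1}$, not $b_{i-1}$). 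The comparison is genuinely global: the deficit at such interfaces must be recovered from dissipation at neighbouring interfaces by a pairing or telescoping argument, which is the actual content of the cited lemma. So as written your proposal establishes (i), the corrected form of (ii), and \eqref{eq:weakBVboundv}, but only \emph{reduces} \eqref{eq:weakBVboundu} to \eqref{eq:enoconjecture} without proving it.
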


\begin{proof}
The entropy stability follows from the calculations in Section \ref{sec:entropystableschemes} and the sign property \eqref{eq:ENOsignproperty}. Local Lipschitz continuity of $F$ follows from the Lipschitz continuity of $\tilde{F}$ and the upper bound \eqref{eq:ENOupperbound}.

For the ``weak TV bounds'' \eqref{eq:weakBVbound}, summing \eqref{eq:SCLFVMentropy} over $i,j\in\Z$, integrating over $t\in[0,T]$ and using the specific form of $F$ in \eqref{eq:tecnoflux} yields
\begin{align*}
\frac{1}{2}\sum_{i,j}(u_{i,j}(T))^2\,\Dx\Dy - \frac{1}{2}\sum_{i,j}(u_{i,j}(0))^2\,\Dx\Dy = -\mathcal{E}, \\
\mathcal{E} := \int_0^T\sum_{i,j}\Bigl(D^{x}_{\iphf,j}\jmp{u}_{\iphf,j}\jmpr{u}_{\iphf,j}\Dy + D^{y}_{i,\jphf}\jmp{u}_{i,\jphf}\jmpr{u}_{i,\jphf}\Dx\Bigr)\,dt.
\end{align*}
Since $D\geq0$ and the reconstruction satisfies the sign property \eqref{eq:ENOsignproperty}, we have $\mathcal{E}\geq0$. From the above we also see that $\mathcal{E} \leq \frac{1}{2}\sum_{i,j}(u_{i,j}(0))^2\,\Dx\Dy \leq \frac{1}{2}\|u_0\|_{L^2(\R^2)}^2$, so we see that the left-hand side of \eqref{eq:weakBVboundv} can be upper-bounded by $\underline{D}\mathcal{E} \leq \frac{1}{2}\underline{D}\|u_0\|_{L^2(\R^2)}^2 < \infty$.

For the remaining property \eqref{eq:weakBVboundu} we use the following fact, proved in \cite[Section 4.4]{Fjo16}: For every grid function $(w_i)_{i\in\Z}$ with compact support,
\begin{equation}\label{eq:enoconjecture}
\sum_{i \in \Z} \bigl|\jmp{w}_\iphf\bigr|^{3} \leq 2\|w\|_{l^\infty} \sum_{i \in \Z} \jmpr{w}_\iphf\jmp{w}_\iphf
\end{equation}
where $\jmpr{w}_\iphf=w_\iphf^+-w_\iphf^-$ denotes the jump in the second-order ENO reconstruction of $w$. Thus, the left-hand side of \eqref{eq:weakBVboundu} can be bounded by $\|u\|_{L^\infty}$ times the left-hand side of \eqref{eq:weakBVboundv}.
\end{proof}

\begin{remark}
For even higher-order TECNO schemes, the results in Theorem \ref{thm:tecnoproperties} are still valid, with the exception of \eqref{eq:weakBVboundu}: The crucial estimate \eqref{eq:enoconjecture} has been conjectured but remains unproven; cf.~ \cite[Section 4.4]{Fjo16} or \cite[Section 5.5]{UlrikIII}.
\end{remark}

\section{Convergence of the scheme}\label{sec:convergence}
Given a numerical solution $(u_{i,j}(t))_{i,j\in\Z, t\in\R_+}$ computed with the second-order TECNO scheme (cf.~Definition \ref{def:tecno}) we define the piecewise constant function
\[
u^\DxDy(x,y,t) := u_{i,j}(t) \qquad \text{for } (x,y)\in\cell_{i,j},
\]
where $\DxDy=(\Dx,\Dy)$. The goal of this section will be to show the following theorem:
\begin{theorem}\label{thm:convergence}
Assume that the solution $u^\DxDy$ computed by the TECNO scheme (cf.~Definition \ref{def:tecno}) is uniformly $L^\infty$ bounded, 
\begin{equation}\label{eq:linftybound}
\|u^\DxDy\|_{L^\infty(\R^2\times[0,T])} \leq M \qquad \text{for every } \DxDy=(\Dx,\Dy)>0
\end{equation}
for some $M>0$. Then there is some subsequence $\DxDy'\to0$ such that $u^{\DxDy'}\to u$ pointwise a.e.~and in $L^p(\R^2\times[0,T])$ for every $p\in[1,\infty)$. The function $u$ is a weak solution of \eqref{eq:SCLII} which satisfies the entropy condition \eqref{eq:ContiEntropyInequality} for the entropy $\eta(u)=u^2$.
\end{theorem}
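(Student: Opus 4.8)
The proof will follow the compensated-compactness paradigm, using the weak TV bounds \eqref{eq:weakBVbound} of Theorem \ref{thm:tecnoproperties} as the principal quantitative input. First I would invoke the $L^\infty$ bound \eqref{eq:linftybound} and the fundamental theorem of Young measures to extract a subsequence $\DxDy'\to0$ and a Young measure $(\nu_{x,y,t})$, supported in $[-M,M]$, such that $g(u^{\DxDy'})\overset{*}{\rightharpoonup}\langle\nu,g\rangle$ in $L^\infty(\R^2\times(0,T))$ for every $g\in C(\R)$. It then suffices to show that $\nu_{x,y,t}$ is almost everywhere a Dirac mass, since this is equivalent to a.e.\ convergence and, by dominated convergence, to convergence in every $L^p_{\mathrm{loc}}$.

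The heart of the argument is to show that for every entropy pair $(\eta,q)$ with $\eta\in C^2$, the entropy-production distribution
\[
\mu^\DxDy := \partial_t\eta(u^\DxDy) + \nabla\cdot q(u^\DxDy)
\]
is precompact in $H^{-1}_{\mathrm{loc}}$. Repeating the manipulation \eqref{eq:SCLFVMentropy}--\eqref{eq:entropyresidualflux} with entropy variable $w_{i,j}=\eta'(u_{i,j})$ — but recalling that $\tilde F$ is entropy conservative only for $\tilde\eta(u)=u^2/2$ — produces a discrete entropy balance whose residual splits into two pieces. The numerical-diffusion piece $-D\jmp{w}\jmpr{u}$ is, for convex $\eta$, sign-definite (by the sign property \eqref{eq:ENOsignproperty} together with $\abs{\jmp{w}}\le C\abs{\jmp{u}}$) and has total mass controlled by \eqref{eq:weakBVboundv}; the entropy-conservation error of $\tilde F$ measured against $\eta$ is $O(\abs{\jmp{u}}^3)$ per interface, with total mass controlled by the cubic bound \eqref{eq:weakBVboundu}. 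Testing against $\phi$ and summing by parts, both pieces are bounded in $\mathcal{M}_{\mathrm{loc}}$ (Radon measures) uniformly in $\DxDy$, while the remaining finite-volume truncation error vanishes in $H^{-1}_{\mathrm{loc}}$ as $\DxDy\to0$. Since $\eta(u^\DxDy),q(u^\DxDy)$ are uniformly bounded, $\mu^\DxDy$ is bounded in $W^{-1,p}_{\mathrm{loc}}$ for every $p\in(1,\infty)$; Murat's lemma then upgrades the decomposition ``bounded in $\mathcal{M}_{\mathrm{loc}}$ plus precompact in $H^{-1}_{\mathrm{loc}}$'' to precompactness of $\mu^\DxDy$ in $H^{-1}_{\mathrm{loc}}$. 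Note that this step genuinely requires Murat's lemma rather than a vanishing argument: for a non-distinguished entropy the production is merely bounded, not small, in measures.

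With $H^{-1}_{\mathrm{loc}}$-compactness of all entropy productions in hand, I would apply the div--curl lemma to pairs of entropy fields to obtain Tartar's commutation relations for $\nu$ and deduce that $\nu$ concentrates. This reduction is precisely where I expect the main obstacle to lie. In one space dimension the div--curl lemma pairs a divergence-controlled entropy field with a $90^\circ$-rotated copy whose curl is again an entropy production, and the commutation relations then force $\nu$ to be Dirac; in several space dimensions no such rotation exists, the ``spatial--spatial'' curl components of an entropy field are not among the controlled quantities, and the characteristic cone is too large for the commutation relations \emph{alone} to exclude the persistent oscillations that produce a spread-out $\nu$. Overcoming this will require exploiting the weak TV bounds \eqref{eq:weakBVbound} \emph{in every coordinate direction simultaneously} — and not merely the $H^{-1}$-compactness they imply — since a mesh-scale oscillation of amplitude $O(1)$ makes $\sum_{i,j}\abs{\jmp{u}}^3\Dy$ blow up like $1/\Dx$ and is therefore excluded, whereas it is invisible to the div--curl machinery.

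Once $\nu=\delta_{u(x,y,t)}$ is established the remaining steps are routine. Strong convergence $u^{\DxDy'}\to u$ lets me pass to the limit in the scheme by a Lax--Wendroff argument: consistency and local Lipschitz continuity of the flux (Theorem \ref{thm:tecnoproperties}(ii)) handle the $\tilde F$-contribution, while the numerical-diffusion contribution $\sum_{i,j} D\jmpr{u}\jmp{\phi}$ is shown to be $O(\DxDy^{1/3})$ by a H\"older estimate fed by \eqref{eq:weakBVboundu} and \eqref{eq:ENOupperbound}, hence vanishes; thus $u$ is a weak solution of \eqref{eq:SCLII}. Finally, because $\tilde F$ is \emph{exactly} entropy conservative for $\tilde\eta(u)=u^2/2$, the associated discrete entropy inequality \eqref{eq:SCLFVMentropystableI} carries no error term, and passing to the limit using strong convergence together with the nonnegativity of the dissipation $\mathcal{E}$ yields $\partial_t\tilde\eta(u)+\nabla\cdot\tilde q(u)\le0$ for the corresponding entropy flux $\tilde q$; multiplying by $2$ gives the claimed entropy condition \eqref{eq:ContiEntropyInequality} for $\eta(u)=u^2$.
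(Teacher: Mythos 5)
Your proposal reproduces most of the paper's architecture, but it stops short at exactly the step the paper resolves by citation, and the fix you speculate about is not the one the paper needs. The reduction you flag as ``the main obstacle'' --- passing from $H^{-1}_\loc$-precompactness of all entropy productions to a Dirac Young measure in \emph{several} space dimensions --- is precisely the content of Panov's theorem (Theorem \ref{thm:panov}, Theorem 5 in \cite{PanovI}), which the paper takes off the shelf: for a uniformly bounded sequence, precompactness in $H^{-1}_\loc(\R^d\times\R_+)$ of the Kru\v{z}kov entropy productions \eqref{eq:Precompactness} already yields an a.e.\ convergent subsequence in any dimension, with no rotation trick and no additional directional use of the weak TV bounds. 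The paper supplements this with Corollary \ref{cor:PanovI}, a uniform-approximation argument ($\|\eta_{k,m}-\eta_k\|_{L^\infty}\to0$ with bounded $\eta_{k,m}',\eta_{k,m}''$, plus a diagonal subsequence in $H^{-1}(U)$) showing that it suffices to verify precompactness for $C^2$ entropy pairs --- a step your proposal needs too, since your discrete entropy balance (mean value theorem giving the $O(|\jmp{u}|^3)$ interface error, and $|\jmp{\eta'(u)}|\leq \|\eta''\|_{L^\infty}|\jmp{u}|$ for the diffusion piece) requires $\eta\in C^2$, while the compensated compactness input is formulated for the Lipschitz Kru\v{z}kov entropies. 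As written, your proof is therefore incomplete: you correctly identify that the 1D div--curl argument does not generalize, but your proposed remedy (exploiting \eqref{eq:weakBVbound} ``in every coordinate direction simultaneously,'' beyond the $H^{-1}$-compactness it implies) is both unsubstantiated and unnecessary; the weak TV bounds enter only through the measure bound of Lemma \ref{lem:entropyresidualbound} and the $H^{-1}$-smallness of $\nabla\cdot q(u^\DxDy)-\nabla\cdot Q$, exactly as in your Murat decomposition.

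Everything else in your proposal matches the paper's proof essentially line for line: the splitting of the residual into the entropy-conservation error of $\tilde F$ against $\eta$ (cubic in $\jmp{u}$, controlled by \eqref{eq:weakBVboundu}) and the diffusion term (controlled by \eqref{eq:weakBVboundv}) is Lemma \ref{lem:entropyresidualbound}; the decomposition into a measure-bounded piece $\partial_t\eta(u^\DxDy)+\nabla\cdot Q$ and an $H^{-1}_\loc$-vanishing piece $\nabla\cdot q(u^\DxDy)-\nabla\cdot Q$, followed by Murat's lemma, is Lemma \ref{lm:precompactness}; your Lax--Wendroff step, including the H\"older estimate killing the diffusion contribution via \eqref{eq:ENOupperbound} and \eqref{eq:weakBVboundu}, is Lemma \ref{lem:LaxWendroff}. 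One small misstatement at the end: the discrete entropy inequality for $\tilde\eta(u)=u^2/2$ is not error-free because $\tilde F$ is exactly conservative --- the full TECNO flux \eqref{eq:tecnoflux} contributes the residual $-D\jmp{u}\jmpr{u}$ --- but this residual is nonpositive by the sign property \eqref{eq:ENOsignproperty}, which is what Lemma \ref{lem:entropyconsistency} actually uses, so your conclusion \eqref{eq:EntropyStableLW} stands once the compactness gap is filled.
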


We will use the method of compensated compactness, and we give the main results required here in Section \ref{sec:compcomp}. The convergence proof is given in Section \ref{sec:convergenceproof}, but we summarize it here:
\begin{proof}[Proof of Theorem \ref{thm:convergence}]
The compactness result, Corollary \ref{cor:PanovI} requires the entropy residuals $\{\partial_t\eta(u^\DxDy) + \nabla\cdot q(u^\DxDy)\}_{\DxDy>0}$ to lie in a compact subset of $H^{-1}_\loc$. Lemma \ref{lem:entropyresidualbound} bounds their discrete equivalents $\partial_t\eta(u^\DxDy) + \nabla\cdot Q(u^\DxDy)$ by terms which, by Theorem \ref{thm:tecnoproperties}\textit{(iii)} and \eqref{eq:linftybound}, are bounded in the sense of measures. Lemma \ref{lm:precompactness} shows that the remainder $\nabla\cdot(q-Q)$ is small in $H^{-1}_\loc$. We then conclude (using Murat's Lemma) that the entropy residuals are precompact, and hence there is some strongly convergent subsequence.

Lemma \ref{lem:LaxWendroff} is a standard ``Lax--Wendroff'' proof, showing that the limit is a weak solution, and Lemma \ref{lem:entropyconsistency} shows consistency with a single entropy condition.
\end{proof}

\subsection{Compensated compactness}\label{sec:compcomp}
We briefly summarize the technical compactness lemmas here, and refer to \cite{ChenI, TartarI} for more details.
\begin{lemma}[Murat's Lemma]\label{lm:Murat}
Let $\Omega \subset \R^d$, $d \geq 2$ be an open, bounded subset. Let $(\mu)_{n \in \N}$ be a bounded sequence in $W^{-1,p}(\Omega)$ for some $2 < p \leq \infty$. Suppose also that $\forall \; n \in \N$
\begin{equation}\label{eq:MuratDecomposition}
\mu_{n} = \xi_{n} + \pi_{n},
\end{equation}
where $\xi_{n}$ lies in a compact set of $H^{-1}_\loc(\Omega)$ and $\pi_{n}$ lies in a bounded set of $\mathcal{M}_\loc (\Omega)$. Then $(\mu_n)_{n \in \N}$ lies in a compact subset of $H^{-1}_\loc(\Omega)$. 
\end{lemma}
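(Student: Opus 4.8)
The plan is to reduce the statement to a \emph{local interpolation argument} between two negative-order Sobolev spaces: the point is that bounded measures embed \emph{compactly} into $W^{-1,q}$ for suitably small $q$, while the sequence is simultaneously controlled in $W^{-1,p}$ with $p>2$, and $H^{-1}=W^{-1,2}$ sits in between.

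First I would localize. Fix a relatively compact open set $\Omega'\Subset\Omega$ and a cutoff $\chi\in C_c^\infty(\Omega)$ with $\chi\equiv1$ on $\Omega'$. Since precompactness in $H^{-1}_\loc(\Omega)$ is exactly precompactness of the localized sequences $(\chi\mu_n)$ on an exhausting family of such $\Omega'$, it suffices to treat a fixed $\Omega'$. Multiplication by $\chi$ preserves each hypothesis: it keeps $(\chi\mu_n)$ bounded in $W^{-1,p}$, keeps $(\chi\xi_n)$ in a compact subset of $H^{-1}$, and turns $(\pi_n)$ into finite signed measures $(\chi\pi_n)$ of uniformly bounded total variation. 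To lighten notation I continue to write $\mu_n,\xi_n,\pi_n$ for these localized objects on $\Omega'$.

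The key ingredient is the compact embedding of measures. On the bounded domain $\Omega'$, the Morrey embedding gives $W^{1,q'}_0(\Omega')\hookrightarrow\hookrightarrow C(\overline{\Omega'})$ whenever $q'>d$; taking adjoints and using the Riesz representation $C(\overline{\Omega'})^*=\mathcal{M}(\overline{\Omega'})$ yields the compact embedding $\mathcal{M}(\Omega')\hookrightarrow\hookrightarrow W^{-1,q}(\Omega')$ for the conjugate exponent, i.e. for $1<q<d/(d-1)$. Because $d\geq2$ forces $d/(d-1)\leq2$, I may fix one exponent $q$ with $1<q<d/(d-1)\leq2$. Then $(\pi_n)$ is precompact in $W^{-1,q}(\Omega')$, and since $\Omega'$ is bounded and $q\leq2$ the continuous inclusion $H^{-1}(\Omega')=W^{-1,2}(\Omega')\hookrightarrow W^{-1,q}(\Omega')$ shows that $(\xi_n)$ is precompact in $W^{-1,q}(\Omega')$ as well. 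Hence $(\mu_n)=(\xi_n+\pi_n)$ is precompact in $W^{-1,q}(\Omega')$.

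Finally I would upgrade this to $H^{-1}$ by interpolation. I now have $(\mu_n)$ bounded in $W^{-1,p}$ with $p>2$ and precompact in $W^{-1,q}$ with $q<2$. Choosing $\theta\in(0,1)$ with $1/2=(1-\theta)/q+\theta/p$ (possible exactly because $q<2<p$, with $p=\infty$ giving $\theta=1-q/2$), the interpolation inequality $\|w\|_{W^{-1,2}}\leq C\|w\|_{W^{-1,q}}^{1-\theta}\|w\|_{W^{-1,p}}^{\theta}$ implies that any subsequence which is Cauchy in $W^{-1,q}$ is Cauchy in $W^{-1,2}=H^{-1}(\Omega')$. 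Since precompactness in $W^{-1,q}$ supplies such Cauchy subsequences, $(\mu_n)$ is precompact in $H^{-1}(\Omega')$, and exhausting $\Omega$ by such $\Omega'$ gives the claim. The main obstacle I anticipate is justifying the interpolation inequality for these \emph{negative-order} spaces (unlike the $L^p$ case it is not completely standard, and requires either identifying $W^{-1,r}$ as a retract of a product of $L^r$-spaces to transport the $L^r$-interpolation, or a direct real/complex interpolation identification), together with carefully checking that the cutoff $\chi$ does not spoil either the measure bound or the $W^{-1,p}$ bound.
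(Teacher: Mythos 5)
The paper does not prove this lemma at all: it is quoted as a known result from the compensated compactness literature, with the reader referred to \cite{ChenI,TartarI}. So there is no internal proof to compare against; what you have written is, in substance, the classical proof of Murat's lemma as it appears in those references (and in Evans' CBMS lectures on weak convergence methods), and it is correct. Your three steps --- localization by a cutoff, the compact embedding $\mathcal{M}(\Omega')\hookrightarrow W^{-1,q}(\Omega')$ for $1<q<d/(d-1)$ via Morrey's embedding $W^{1,q'}_0(\Omega')\hookrightarrow\hookrightarrow C(\overline{\Omega'})$ for $q'>d$ plus Schauder's theorem on adjoints, and the interpolation upgrade from ``precompact in $W^{-1,q}$, bounded in $W^{-1,p}$'' to ``precompact in $H^{-1}$'' --- are exactly the standard route. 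Three refinements would make the write-up airtight. First, the negative-order interpolation inequality you rightly flag as the delicate point has an elementary justification that sidesteps abstract interpolation theory: put the localized sequence on a ball $B\supset\supp\chi$, solve $\Delta z_n=\chi\mu_n$ with zero boundary data, and use Calder\'on--Zygmund estimates to get $\nabla z_n$ bounded in $L^p(B)^d$ and precompact in $L^q(B)^d$; the ordinary $L^r$-interpolation inequality $\|\nabla z_n\|_{L^2}\leq\|\nabla z_n\|_{L^q}^{1-\theta}\|\nabla z_n\|_{L^p}^{\theta}$ then gives precompactness of $\nabla z_n$ in $L^2$, hence of $\chi\mu_n=\Delta z_n$ in $H^{-1}(B)$. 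This is your ``retract'' idea made concrete. Second, when $p=\infty$ you should first reduce to a finite exponent: on a bounded set, boundedness in $W^{-1,\infty}$ implies boundedness in $W^{-1,p}$ for every finite $p>2$, and this reduction avoids both the failure of the $L^\infty$ elliptic estimate and any endpoint-interpolation subtlety, rather than working directly with $\theta=1-q/2$. Third, a bookkeeping point: since $\chi\equiv1$ on $\Omega'$ forces $\supp\chi$ to spill outside $\Omega'$, the measures $\chi\pi_n$ and functionals $\chi\mu_n$ should be regarded as objects on an intermediate set $\Omega''$ with $\supp\chi\subset\Omega''\Subset\Omega$ (or on the ball $B$ above), not on $\Omega'$ itself; this costs nothing but is needed for the duality $\mathcal{M}(\overline{\Omega''})=C(\overline{\Omega''})^*$ to apply to the localized measures. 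With these touches, your argument is a complete and self-contained proof of the lemma the paper imports as a black box.
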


\begin{theorem}[Panov, Theorem 5 in \cite{PanovI}]\label{thm:panov}
Let $(u_\eps)_{\eps > 0}$ be a bounded sequence in $L^\infty(\R^d \times \R_+)$ such that for every $k \in \R$, the set
\begin{equation}\label{eq:Precompactness}
\bigl\{\partial_t \eta_k(u_\eps) + \nabla\cdot q_k(u_\eps)\bigr\}_{\eps > 0}
\end{equation}
is precompact in $H^{-1}_\loc(\R^d \times \R_+)$. (Here, $(\eta_k,q_k)$ denote the Kruzkov entropy pairs \eqref{eq:KruzkovEntropy}.) Then there is a subsequence $\eps_n \to 0$ as $n\to\infty$ and a function $u \in L^\infty(\R^d \times \R_+)$ such that
\begin{equation}
u_{\eps_n} \to u \quad \text{a.e.\ and in } L^p_\loc(\R^d \times \R_+) \text{ for every } 1 \leq p < \infty.
\end{equation}
\end{theorem}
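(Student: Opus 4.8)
The plan is to set the finite-volume scheme aside and prove the stated abstract compactness principle directly, using only the two hypotheses: the uniform $L^\infty$ bound on $(u_\eps)$ and the $H^{-1}_\loc$-precompactness of every Kruzkov entropy residual. First I would apply the fundamental theorem on Young measures: since $(u_\eps)$ is bounded in $L^\infty$, after extracting a subsequence it generates a Young measure $(\nu_z)_z$, $z=(x,t)$, a family of probability measures supported in a fixed compact interval, characterized by $g(u_\eps)\rightharpoonup^*\langle\nu_z,g\rangle$ for every $g\in C(\R)$. The entire theorem then reduces to showing that $\nu_z=\delta_{u(z)}$ is a Dirac mass for a.e.\ $z$. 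Granting this, taking $g(\lambda)=\lambda$ and $g(\lambda)=\lambda^2$ shows the local variance of $\nu_z$ vanishes, whence $u_\eps\to u$ in $L^2_\loc$ and therefore, along a further subsequence, a.e.; the uniform bound together with dominated convergence then upgrades this to $L^p_\loc$ for all finite $p$.

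To force $\nu_z$ to concentrate I would exploit the whole Kruzkov family via compensated compactness. In the model case $d=1$ (space-time dimension two) the argument is classical: for any parameters $k,\ell$ the space-time fields $(\eta_k(u_\eps),q_k(u_\eps))$ have precompact divergence in $H^{-1}_\loc$ by hypothesis, and rotating one of them by a right angle turns its divergence control into curl control, so the div--curl lemma (\cite{TartarI,ChenI}) delivers Tartar's commutation relation
\[
\bigl\langle\nu_z,\,\eta_k q_\ell-\eta_\ell q_k\bigr\rangle=\langle\nu_z,\eta_k\rangle\langle\nu_z,q_\ell\rangle-\langle\nu_z,\eta_\ell\rangle\langle\nu_z,q_k\rangle.
\]
Inserting the Kruzkov pairs \eqref{eq:KruzkovEntropy} for all $k,\ell$ and using that $f$ is nowhere locally affine then collapses $\supp\nu_z$ to a single point.

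The hard part is that this rotation is special to two dimensions, so for $d\geq2$ the div--curl lemma is not directly applicable and a microlocal substitute is needed. Here I would turn to the H-measures (microlocal defect measures) of Tartar and G\'erard attached to the defect from strong convergence of $(u_\eps)$. The $H^{-1}_\loc$-precompactness of each Kruzkov residual feeds, through the localization principle, into a support constraint: over each base point $z$ and frequency direction $(\tau,\xi)\in S^d$, the H-measure is carried by the characteristic set $\{\lambda:\tau+\xi\cdot f'(\lambda)=0\}$. Deriving this localization and then showing the H-measure must vanish is the crux of the proof, and I expect it to be the main obstacle. The vanishing step requires the nondegeneracy of the flux --- that for no $(\tau,\xi)\neq0$ does $\lambda\mapsto\tau+\xi\cdot f'(\lambda)$ vanish on a set of positive measure --- which is precisely the ingredient without which the conclusion fails (for $f\equiv0$, oscillations purely in $x$ give identically vanishing residuals yet no strong convergence). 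Under this nondegeneracy the characteristic set is $\nu_z$-null, the H-measure vanishes, and $(u_\eps)$ carries no persistent oscillation, i.e.\ $\nu_z$ is Dirac; with the reduction of the first paragraph this yields the asserted a.e.\ and $L^p_\loc$ convergence.
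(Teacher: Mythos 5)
The paper gives no proof of Theorem \ref{thm:panov}: it is imported verbatim from Panov \cite{PanovI} and used as a black box (only through Corollary \ref{cor:PanovI}), so there is no internal argument to compare yours against; it has to be measured against the known proof, whose architecture you have in fact reproduced correctly --- Young-measure reduction, Tartar's div--curl commutation relation in the case $d=1$, and localization of a microlocal defect measure on the characteristic set $\{\lambda:\tau+\xi\cdot f'(\lambda)=0\}$ for $d\geq2$. Your most valuable observation concerns nondegeneracy, and it is right: as transcribed here the theorem omits the hypothesis, present in the original result, that for every $(\tau,\xi)\neq0$ the set $\{\lambda\in[-M,M]:\tau+\xi\cdot f'(\lambda)=0\}$ has zero Lebesgue measure, and without it the statement is simply false. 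Your counterexample is valid: for $f\equiv0$ every Kruzkov flux $q_k$ in \eqref{eq:KruzkovEntropy} vanishes, so $u_\eps(x,t)=\sin(x_1/\eps)$ has identically zero entropy residuals --- trivially precompact in $H^{-1}_\loc$ --- while no subsequence converges a.e.; a linear flux gives the same with travelling oscillations. Thus only the corrected statement is provable, and the missing hypothesis propagates to wherever the theorem is invoked, i.e.\ to Corollary \ref{cor:PanovI} and Theorem \ref{thm:convergence} as well.

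That said, as a proof even of the corrected statement your proposal has a genuine gap, which you name yourself: everything past the routine Young-measure reduction and the classical one-dimensional case is deferred. The crux for $d\geq2$ --- building a defect measure that tracks the entropy parameter $k$ simultaneously for the whole uncountable Kruzkov family (Panov's parametrized ``H-measures with variables'', or equivalently an argument on the kinetic function $\chi(\lambda;u_\eps)$ in the spirit of velocity averaging), deriving the localization principle from the $H^{-1}_\loc$-precompactness of the residuals, and showing that nondegeneracy forces this measure to vanish --- is precisely the content of \cite{PanovI}, and none of it is carried out. Note in particular that the classical scalar H-measure of Tartar/G\'erard does not apply off the shelf: the hypothesis \eqref{eq:Precompactness} is indexed by $k\in\R$, and obtaining the localization uniformly in $k$ is where the real work lies. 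Your one-dimensional paragraph is fine as a sketch (with ``$f$ nowhere locally affine'' sufficing there, though the measure-zero condition is what the multi-dimensional step needs). In summary: right strategy, a correct and important repair of the statement as printed, but the multi-dimensional core remains a plan rather than a proof.
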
 

The following corollary shows that it is enough to consider \emph{smooth} entropies in the above result.
\begin{corollary}\label{cor:PanovI}
Let $(u_\eps)_{\eps > 0}$ be a bounded sequence in $L^\infty(\R^d \times \R_+)$ such that for every entropy pair $(\eta,q)$ with $\eta\in C^2_b(\R)$, the set
\begin{equation}\label{eq:PrecompactnessI}
\bigl\{\partial_t \eta(u_\eps) + \nabla\cdot q(u_\eps)\bigr\}_{\eps > 0}
\end{equation}
is precompact in $H^{-1}_\loc(\R^d \times \R_+)$.  Then there is a subsequence $\eps_n \to 0$ as $n\to\infty$ and a function $u \in L^\infty(\R^d \times \R_+)$ such that
\begin{equation}
u_{\eps_n} \to u \quad \text{a.e.\ and in } L^p_\loc(\R^d \times \R_+) \text{ for every } 1 \leq p < \infty.
\end{equation}
\end{corollary}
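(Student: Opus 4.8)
The plan is to deduce the hypothesis of Panov's theorem (Theorem~\ref{thm:panov}) — precompactness of the \emph{Kru\v{z}kov}-entropy residuals — from the assumed precompactness for \emph{smooth} entropies. Fix $k\in\R$ and write $R_\eps := \partial_t\eta_k(u_\eps) + \nabla\cdot q_k(u_\eps)$; it suffices to show that $\{R_\eps\}_{\eps>0}$ is precompact in $H^{-1}_\loc(\R^d\times\R_+)$ for every $k$, and then invoke Theorem~\ref{thm:panov}. The idea is to approximate $\eta_k$ by smooth convex entropies for which the hypothesis applies, and to control the approximation error \emph{uniformly in} $\eps$ using the uniform bound $M := \sup_{\eps}\norm{u_\eps}_{L^\infty}<\infty$.

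First I would construct the approximants. For $\delta>0$ let $\eta_k^\delta := \eta_k * \rho_\delta$ be a mollification of the Kru\v{z}kov entropy, which is smooth and convex with $(\eta_k^\delta)''=2\rho_\delta(\cdot-k)\geq0$ and $\abs{(\eta_k^\delta)'}\leq1$, and let $q_k^\delta$ be the associated entropy flux determined by $(q_k^\delta)'=(\eta_k^\delta)'f'$. Because the $u_\eps$ take values in $[-M,M]$, the residuals depend only on the restrictions of $\eta$ and $q$ to $[-M,M]$; modifying $\eta_k^\delta$ outside $[-M,M]$ (which leaves all residuals unchanged) we may regard $(\eta_k^\delta,q_k^\delta)$ as an admissible smooth convex entropy pair. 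The assumption then gives, for each fixed $\delta>0$, precompactness in $H^{-1}_\loc$ of $\{R_\eps^\delta\}_{\eps>0}$, where $R_\eps^\delta := \partial_t\eta_k^\delta(u_\eps) + \nabla\cdot q_k^\delta(u_\eps)$.

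Next I would establish uniform convergence of the residuals as $\delta\to0$. Since $(\eta_k^\delta)'=\sign(\cdot-k)*\rho_\delta\to\sign(\cdot-k)$ pointwise a.e.\ and boundedly, dominated convergence yields $\eta_k^\delta\to\eta_k$ and $q_k^\delta\to q_k$ uniformly on $[-M,M]$. Hence, for every bounded open $\Omega\subset\R^d\times\R_+$, using that the space-time divergence maps $L^2(\Omega)$ into $H^{-1}(\Omega)$ with norm at most $1$,
\[
\norm{R_\eps^\delta - R_\eps}_{H^{-1}(\Omega)} \leq |\Omega|^{1/2}\Bigl(\norm{\eta_k^\delta - \eta_k}_{L^\infty([-M,M])} + \norm{q_k^\delta - q_k}_{L^\infty([-M,M])}\Bigr),
\]
and the right-hand side tends to $0$ as $\delta\to0$, \emph{uniformly in} $\eps$ (this is exactly where the uniform $L^\infty$ bound is essential).

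Finally I would conclude by a total-boundedness argument: a subset of a metric space that lies, for every $\sigma>0$, within distance $\sigma$ of some totally bounded set is itself totally bounded. Given $\sigma>0$, choose $\delta$ so the displayed quantity is $<\sigma/2$ for all $\eps$; cover the precompact set $\{R_\eps^\delta\}_\eps$ by finitely many $H^{-1}(\Omega)$-balls of radius $\sigma/2$; their concentric $\sigma$-enlargements then cover $\{R_\eps\}_\eps$. Thus $\{R_\eps\}_\eps$ is totally bounded in $H^{-1}(\Omega)$ for every bounded $\Omega$, hence precompact in $H^{-1}_\loc$. As $k$ was arbitrary, Theorem~\ref{thm:panov} applies and produces the desired subsequence. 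I expect the main obstacle to be the bookkeeping that keeps the approximation uniform in $\eps$ in the $H^{-1}_\loc$ norm together with the observation that only the behavior of the entropies on $[-M,M]$ is relevant; everything else is routine.
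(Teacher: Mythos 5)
Your proposal is correct and follows essentially the same route as the paper's proof: approximate the Kru\v{z}kov pair $(\eta_k,q_k)$ by smooth convex entropies (your explicit mollification versus the paper's asserted approximants $\eta_{k,m}$), bound the difference of the residuals in $H^{-1}(\Omega)$ by $|\Omega|^{1/2}$ times the uniform distance of the entropy pairs, uniformly in $\eps$, and conclude precompactness of the Kru\v{z}kov residuals so that Theorem~\ref{thm:panov} applies. The only (immaterial) difference is the final bookkeeping, where you use total boundedness of a set lying within $\sigma$ of a precompact set, while the paper runs a diagonal subsequence/Cauchy argument.
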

\begin{proof}
For $k\in\R$ and $m \in \N$, let $\eta_{k,m}\in C^2(\R)$ be convex functions satisfying
\begin{equation*}
\begin{cases}
\|\eta_{k,m}'\|_{L^\infty(\R)}+\|\eta_{k,m}''\|_{L^\infty(\R)} <\infty \\
\|\eta_{k,m} - \eta_k\|_{L^\infty(\R)} \to 0 & \text{as } m\to\infty.
\end{cases}
\end{equation*}
Then the corresponding $C^2$ entropy flux $q_{k,m}(u)=\int_k^u f'(s)\eta_{k,m}'(s)\,ds$ also satisfies $\|q_{k,m} - q_k\|_{L^\infty(\R)} \to 0$ as $m \to \infty$. Fix $k\in\R$ and decompose
\begin{equation}\label{eq:entropydecomposition}
E_{\eps}:=\partial_t \eta_k(u_\eps) + \nabla\cdot q_k(u_\eps) = E_{\eps,m}^1 + E_{\eps,m}^2
\end{equation}
where 
\begin{align*}
E_{m,\eps}^1 &:=\partial_t \eta_{k,m}(u_\eps) + \nabla\cdot q_{k,m}(u_\eps), \\
E_{m,\eps}^2 &:= \partial_t\big(\eta_k(u_\eps)-\eta_{k,m}(u_\eps)\big) + \nabla\cdot\big(q_k(u_\eps)-q_{k,m}(u_\eps)\big).
\end{align*}
Let $(\eps_n)_{n\in\N}$ be a sequence converging to 0 as $n\to\infty$ and fix an open, bounded subset $U\subset\R^d\times\R_+$. It is enough to prove the existence of a further subsequence $E_{\eps'_n}$ which converges in $H^{-1}(U)$. For every $\phi\in H_0^1(U)$ we have
\begin{align*}
\ip{E_{m,\eps}^2}{\phi} &\leq \int_U \big|\eta_k(u_\eps)-\eta_{k,m}(u_\eps)\big\|\partial_t\phi| + \big|q_k(u_\eps)-q_{k,m}(u_\eps)\big\|\nabla\cdot\phi|\,d(x,t) \\
&\leq \underbrace{\big(\|\eta_k-\eta_{k,m}\|_{L^\infty(\R)} + \|q_k-q_{k,m}\|_{L^\infty(\R)}\big)|U|^{1/2}}_{\text{$\to0$ as $m\to\infty$, uniformly in $\eps>0$}}\|\phi\|_{H^1(U)}.
\end{align*}
Let $(\eps_n')_{n\in\N}$ be a subsequence of $(\eps_n)_{n\in\N}$, taken such that $(E_{m,\eps_n'}^1)_{n\in\N}$ is convergent in $H^{-1}(U)$ for every $m\in\N$. In particular, we may assume that $\|E_{m,\eps_n'}^1-E_{m,\eps_k'}^1\|_{H^{-1}(U)}\leq 1/m$ for every $k,n\geq m$. Then
\begin{align*}
\|E_{\eps_n'}-E_{\eps_k'}\|_{H^{-1}(U)} &\leq \|E_{m,\eps_n'}^1-E_{m,\eps_k'}^1\|_{H^{-1}(U)} + \|E_{m,\eps_n'}^2\|_{H^{-1}(U)}+\|E_{m,\eps_k'}^2\|_{H^{-1}(U)} \\
&\to 0
\end{align*}
as $n,k\geq m$ and $m\to\infty$.
\end{proof}

\subsection{Convergence of TECNO}\label{sec:convergenceproof}
The TECNO scheme (Definition \ref{def:tecno}) is guaranteed to dissipate the square entropy $\tilde{\eta}(u)=u^2/2$, but the discrete entropy residual \eqref{eq:SCLFVMentropy} might have either sign. We can nonetheless show that the entropy residual is not too large, in the following sense:

\begin{lemma}\label{lem:entropyresidualbound}
Assume that the solution computed by the TECNO scheme is $L^\infty$ bounded, \eqref{eq:linftybound}. Then for any entropy pair $(\eta,q)$ with $\eta\in C^2$, the total discrete entropy production is upper-bounded by
\begin{equation}\label{eq:TotalEntropyI}
\begin{split}
\bigl|\partial_t\eta(u^\DxDy) + \nabla\cdot Q\bigr|\bigl(\R^2\times[0,T]\bigr) \leq C \int_0^T \sum_{i,j} \Big(|\jmp{u}_{\iphf,j}|^3 \Dy + |\jmp{u}_{i,\jphf}|^3 \Dx \Big)\,dt \\
+ C \int_0^T \sum_{i,j} \Big(\jmp{u}_{\iphf,j}\jmpr{u}_{\iphf,j} \Dy + \jmp{\tilde{u}}_{i,\jphf}\jmpr{\tilde{u}}_{i,\jphf} \Dx \Big)\,dt
\end{split}
\end{equation}
where $\nabla\cdot Q$ denotes the measure whose integral of any $\phi\in C_c^0(\R^2\times\R_+)$ is
\begin{equation}\label{eq:discreteEntropyFluxDef}
\begin{split}
\ip{\nabla\cdot Q}{\phi}= \sum_{i,j\in\Z}\int_{\R_+}\overline{\phi}_{\iphf,j}\frac{Q_{\iphf,j}^x-Q_{\imhf,j}^x}{\Dx}+\overline{\phi}_{i,\jphf}\frac{Q_{i,\jphf}^y-Q_{i,\jmhf}^y}{\Dy}\,dt\Dx\Dy, \\
\overline{\phi}_{\iphf,j}:=\frac{1}{\Dy}\int_{y_\jmhf}^{y_\jphf}\phi(x_\iphf,y,t)\,dy, \qquad \overline{\phi}_{i,\jphf}:=\frac{1}{\Dx}\int_{x_\imhf}^{x_\iphf}\phi(x,y_\jphf,t)\,dx
\end{split}
\end{equation}
and where $Q^x,Q^y$ are given by \eqref{eq:entropyresidualflux}.
\end{lemma}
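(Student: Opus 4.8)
The plan is to estimate the total variation $|\mu|(\R^2\times[0,T])$ of the measure $\mu:=\partial_t\eta(u^\DxDy)+\nabla\cdot Q$ by pairing it against an arbitrary $\phi\in C_c^0(\R^2\times\R_+)$ with $\norm{\phi}_{L^\infty}\le1$ and taking the supremum. The starting point is the cell-wise entropy identity \eqref{eq:SCLFVMentropy}, which for the general $C^2$ entropy $\eta$ expresses the discrete production in cell $(i,j)$ as $\frac{r^x_{\iphf,j}+r^x_{\imhf,j}}{2\Dx}+\frac{r^y_{i,\jphf}+r^y_{i,\jmhf}}{2\Dy}$, with $r,Q$ as in \eqref{eq:entropyresidualflux}. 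Substituting this identity for the time derivative in $\ip{\mu}{\phi}$ and performing a summation by parts that assigns each residual to its interface exactly once, the pairing is controlled by $\norm{\phi}_{L^\infty}$ times $\int_0^T\sum_{i,j}(|r^x_{\iphf,j}|\Dy+|r^y_{i,\jphf}|\Dx)\dd t$. Taking the supremum over $\phi$ reduces the whole lemma to the single interface estimate
\[
|r^x_{\iphf,j}|\le C|\jmp{u}_{\iphf,j}|^3+C\,\jmp{u}_{\iphf,j}\jmpr{u}_{\iphf,j},
\]
together with its analogue in the $y$-direction.

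For this interface estimate I would use the TECNO flux \eqref{eq:tecnoflux} to split, with $v=\eta'(u)$ and $\psi=vf-q$,
\[
r^x_{\iphf,j}=\bigl(\jmp{v}_{\iphf,j}\tilde{F}_{\iphf,j}-\jmp{\psi^x}_{\iphf,j}\bigr)-D^x_{\iphf,j}\jmp{v}_{\iphf,j}\jmpr{u}_{\iphf,j}=:\rho_{\iphf,j}-D^x_{\iphf,j}\jmp{v}_{\iphf,j}\jmpr{u}_{\iphf,j}.
\]
The diffusive term is the easy half. Since $\eta\in C^2$ and $|u|\le M$ we have $|\jmp{v}_{\iphf,j}|\le\norm{\eta''}_{L^\infty([-M,M])}|\jmp{u}_{\iphf,j}|$, while $D^x\le\overline{D}$ and the ENO sign property \eqref{eq:ENOsignproperty} gives $|\jmp{u}_{\iphf,j}||\jmpr{u}_{\iphf,j}|=\jmp{u}_{\iphf,j}\jmpr{u}_{\iphf,j}\ge0$. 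Hence $|D^x_{\iphf,j}\jmp{v}_{\iphf,j}\jmpr{u}_{\iphf,j}|\le\overline{D}\norm{\eta''}_{L^\infty}\jmp{u}_{\iphf,j}\jmpr{u}_{\iphf,j}$, which supplies the second family of terms in \eqref{eq:TotalEntropyI}.

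The crux --- and the step I expect to be the main obstacle --- is the cross-entropy residual $\rho_{\iphf,j}$, which quantifies the failure of the flux $\tilde{F}$ (entropy conservative for the square entropy $\tilde{\eta}(u)=u^2/2$) to conserve the \emph{general} entropy $\eta$. Writing $u_L=u_{i,j}$ and $u_R=u_{i+1,j}$, the integral identities $\jmp{v}_{\iphf,j}=\int_{u_L}^{u_R}\eta''(s)\dd s$ and $\jmp{\psi^x}_{\iphf,j}=\int_{u_L}^{u_R}\eta''(s)f^x(s)\dd s$, combined with the entropy-conservation condition \eqref{eq:EntropyConsTadmor} for $\tilde{\eta}$ (i.e.\ $\tilde{F}_{\iphf,j}=\frac{1}{\jmp{u}_{\iphf,j}}\int_{u_L}^{u_R}f^x(s)\dd s$), collapse into the symmetric representation
\[
\rho_{\iphf,j}=\frac{1}{2\,\jmp{u}_{\iphf,j}}\int_{u_L}^{u_R}\int_{u_L}^{u_R}\bigl[\eta''(s)-\eta''(t)\bigr]\bigl[f^x(t)-f^x(s)\bigr]\dd s\,\dd t.
\]
Both bracketed factors vanish to first order as $s\to t$, so with $f\in C^2$ and $\eta''$ Lipschitz on $[-M,M]$ one obtains $|\rho_{\iphf,j}|\le C|\jmp{u}_{\iphf,j}|^3$, the constant depending only on $\norm{(f^x)'}_{L^\infty}$ and the Lipschitz seminorm of $\eta''$. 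The difficulty is precisely this gain of a power: a crude estimate of the single difference $\jmp{v}\tilde{F}-\jmp{\psi^x}$ yields only $O(|\jmp{u}|^2)$, which is \emph{not} summable against the weak-BV bound \eqref{eq:weakBVboundu}; the extra factor $|\jmp{u}|$ comes solely from the antisymmetrization above, hence from $\tilde{F}$ being exactly conservative for its own entropy. This is also where a little regularity beyond $C^2$ enters --- for merely $C^2$ entropies the same computation gives only $o(|\jmp{u}|^2)$, whereas the cubic bound holds for the smooth entropies to which the lemma is applied in Corollary~\ref{cor:PanovI}. Inserting the two estimates into the interface bound of the first paragraph, and invoking Theorem~\ref{thm:tecnoproperties}\textit{(iii)} together with \eqref{eq:linftybound}, then yields \eqref{eq:TotalEntropyI}.
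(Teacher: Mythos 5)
Your proposal is correct, and its architecture coincides with the paper's own proof: the same reduction of the total-variation bound to per-interface estimates (which the paper leaves even more implicit than you do, ending with ``the conclusion now follows''), the same splitting $r=r^1+r^2$ with $r^1_{\iphf,j}=\jmp{v}_{\iphf,j}\tilde{F}_{\iphf,j}-\jmp{\psi^x}_{\iphf,j}$ and $r^2_{\iphf,j}=-D^x_{\iphf,j}\jmp{v}_{\iphf,j}\jmpr{u}_{\iphf,j}$, and the identical bound $|r^2|\leq\overline{D}\,\|\eta''\|_{L^\infty([-M,M])}\,\jmp{u}\jmpr{u}$ via the sign property \eqref{eq:ENOsignproperty}. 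Where you genuinely diverge is the crux estimate $|r^1|\leq C|\jmp{u}|^3$. The paper passes to the entropy variable $v=\eta'(u)$, writes $\tilde{F}=\jmp{\tilde{\psi}^x}/\jmp{u}$ and $\jmp{\psi^x}/\jmp{v}$ as averages of $f^x(u)$ and $f^x(u(v))$ over the respective intervals, and applies the trapezoidal-rule error to each, so that the common leading term $\tfrac{1}{2}\bigl(f^x(u_{i,j})+f^x(u_{i+1,j})\bigr)$ cancels and the remainders are $O(\jmp{u}^2)$ quadrature corrections involving third derivatives of the potentials. That computation requires essentially $\eta\in C^3$ and, more delicately, the inverse map $u(v)$, hence strict convexity $\eta''>0$; for the regularized Kru\v{z}kov entropies actually fed into this lemma through Corollary \ref{cor:PanovI}, $\eta''$ degenerates and the paper's manipulation is formal (repairable by perturbing $\eta$). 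Your antisymmetrized representation
\[
r^1_{\iphf,j}=\frac{1}{2\,\jmp{u}_{\iphf,j}}\int_{u_{i,j}}^{u_{i+1,j}}\!\!\int_{u_{i,j}}^{u_{i+1,j}}\bigl[\eta''(s)-\eta''(t)\bigr]\bigl[f^x(t)-f^x(s)\bigr]\,ds\,dt,
\]
which follows correctly from $\jmp{v}=\int\eta''$, $\jmp{\psi^x}=\int\eta'' f^x$ and the square-entropy conservation identity $\jmp{u}\tilde{F}=\int f^x$, avoids both issues: it needs neither invertibility of $v$ nor $\eta\in C^3$ (only $\eta''$ Lipschitz on $[-M,M]$, i.e.\ $C^{2,1}$ rather than the paper's effective $C^3$), and it makes transparent where the extra power of $\jmp{u}$ comes from. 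Your side remark that the lemma's stated $C^2$ hypothesis is a hair too weak for the cubic bound --- one only gets $o(|\jmp{u}|^2)$ --- and that this is harmless because Corollary \ref{cor:PanovI} permits smooth approximating entropies, is accurate, and in fact applies with greater force to the paper's own proof, which invokes $\psi'''$ without comment. In short: same skeleton, but your treatment of $r^1$ is a cleaner and slightly more general argument than the one in the paper.
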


\begin{proof}
Let $\tilde{\psi}$ be the entropy potential with respect to the square entropy $\tilde{\eta}=u^2/2$, $\tilde{\psi}(u) = uf(u) - \tilde{q}(u)$. We split the entropy residual $r$ in \eqref{eq:entropyresidualflux} as $r=r^1+r^2$, where 
\[
r_{\iphf,j}^1 = \jmp{v}_{\iphf,j}\tilde{F}^x_{\iphf,j} - \jmp{\psi^{x}}_{\iphf,j}, \qquad 
r_{\iphf,j}^2 = -\jmp{v}_{\iphf,j}D^{x}_\iphf\jmpr{u}_{\iphf,j}
\]
and similarly for $r^{y}_{i,\jphf}$. The first part of the entropy residual can be estimated as
\begin{align*}
|r_{\iphf,j}^1| &\leq \Big| \jmp{v}_{\iphf,j}\tilde{F}^x_{\iphf,j} - \jmp{\psi^{x}}_{\iphf,j} \Big| \\ 
&= \bigg|\jmp{v}_{\iphf,j} \bigg( \frac{\jmp{\tilde{\psi^{x}}}_{\iphf,j}}{\jmp{u}_{\iphf,j}} - \frac{\jmp{\psi^{x}}_{\iphf,j}}{\jmp{v}_{\iphf,j}} \bigg) \bigg| \\
&= \big| \jmp{v}_{\iphf,j} \big| \bigg|\frac{1}{\jmp{u}_{\iphf,j}} \int_{u_{i,j}}^{u_{i+1,j}} \tilde{\psi}'(v)\,dv - \frac{1}{\jmp{v}_{\iphf,j}} \int_{v_{i,j}}^{v_{i+1,j}} \psi'(v) \,dv \bigg| \\ 
&= \big| \jmp{v}_{\iphf,j} \big| \bigg|\frac{1}{\jmp{u}_{\iphf,j}} \int_{u_{i,j}}^{u_{i+1,j}} f(u) \,du - \frac{1}{\jmp{v}_{\iphf,j}} \int_{v_{i,j}}^{v_{i+1,j}} f(u(v)) \,dv \bigg| \\ 
\intertext{\textit{(by the mean value theorem)}}
&= \big| \jmp{v}_{\iphf,j} \big| \bigg|\frac{f(u_{i,j}) + f(u_{i+1,j})}{2} - \frac{\jmp{u}^2_{\iphf,j}}{12} \psi'''(\tilde{\xi}_{\iphf,j}) \\ 
& \qquad - \frac{f(u_{i,j}) + f(u_{i+1,j})}{2} + \frac{\jmp{v}^2_{\iphf,j}}{12} \psi'''(\xi_{\iphf,j})\bigg| \\ 
\intertext{\textit{(by the $L^\infty$ bound on $u$)}}
&\leq C \big|\jmp{u}_{\iphf,j}\big|^3,
\end{align*}
and similarly in the $y$-direction,
\[
|r_{i,\jphf}^1| \leq C \big|\jmp{u}_{i,\jphf}\big|^3.
\]
The second part of the entropy residual can be bounded by
\[
|r_{\iphf,j}^2|\leq \|\eta''\|_{L^\infty([-M,M])}\overline{D}\jmp{u}_\iphf\jmpr{u}_\iphf.
\]
The conclusion now follows.
\end{proof}

We can now show precompactness of the sequence of approximations:

\begin{lemma}\label{lm:precompactness}
Let $\Omega \subset \R^2 \times [0,T]$ be a bounded subset and assume that the solution computed by the TECNO scheme is $L^\infty$ bounded, \eqref{eq:linftybound}. Then there is a subsequence $\DxDy'\to0$ such that $u^{\DxDy'}\to u$ pointwise a.e.~and in $L^p_\loc(\R^2 \times \R_+)$ for $1 \leq p < \infty$, for some $u\in L^1\cap L^\infty(\R^2\times\R_+)$.
\end{lemma}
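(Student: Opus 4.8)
The plan is to verify the hypothesis of the compensated compactness result, Corollary~\ref{cor:PanovI}: I will show that for every entropy pair $(\eta,q)$ with $\eta\in C^2_b(\R)$, the entropy residuals $\mu^\DxDy:=\partial_t\eta(u^\DxDy)+\nabla\cdot q(u^\DxDy)$ -- understood distributionally, with $q(u^\DxDy)$ the piecewise constant composition -- are precompact in $H^{-1}_\loc(\R^2\times\R_+)$. Fixing such a pair, a sequence $\DxDy\to0$, and a bounded open $\Omega\subset\R^2\times\R_+$, I would establish precompactness on $\Omega$ via Murat's Lemma~\ref{lm:Murat}; since $\Omega$ is arbitrary this gives precompactness in $H^{-1}_\loc$, and the subsequential a.e.\ and $L^p_\loc$ convergence then follows directly from Corollary~\ref{cor:PanovI}.

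To apply Murat's Lemma I first record the required $W^{-1,p}$ bound: since $\|u^\DxDy\|_{L^\infty}\leq M$ by \eqref{eq:linftybound} and $\eta,q$ are bounded on $[-M,M]$, the functions $\eta(u^\DxDy),q(u^\DxDy)$ are uniformly bounded in $L^p(\Omega)$ for all $p$, so $\mu^\DxDy$ is uniformly bounded in $W^{-1,p}(\Omega)$ for any fixed $p\in(2,\infty)$. I then split $\mu^\DxDy=\pi^\DxDy+\xi^\DxDy$ with
\[
\pi^\DxDy:=\partial_t\eta(u^\DxDy)+\nabla\cdot Q,\qquad \xi^\DxDy:=\nabla\cdot\bigl(q(u^\DxDy)-Q\bigr),
\]
where $\nabla\cdot Q$ is the discrete entropy flux divergence \eqref{eq:discreteEntropyFluxDef}. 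The part $\pi^\DxDy$ is exactly the discrete entropy production, whose total variation over $\R^2\times[0,T]$ is bounded by Lemma~\ref{lem:entropyresidualbound} in terms of the two weak-BV functionals; by Theorem~\ref{thm:tecnoproperties}\textit{(iii)} (estimates \eqref{eq:weakBVboundu}--\eqref{eq:weakBVboundv}) these are uniformly bounded, so $\pi^\DxDy$ lies in a bounded set of $\mathcal{M}_\loc(\Omega)$.

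The heart of the proof is to show that $\xi^\DxDy\to0$ strongly in $H^{-1}(\Omega)$, so that $\{\xi^\DxDy\}$ is contained in a compact subset of $H^{-1}(\Omega)$. Testing against $\phi\in C_c^\infty(\Omega)$ (dense in $H^1_0$), I would write $\langle\nabla\cdot q(u^\DxDy),\phi\rangle$ (an interface-supported jump measure, as $q(u^\DxDy)$ is piecewise constant) and $\langle\nabla\cdot Q,\phi\rangle$ as sums over cell interfaces; a summation by parts brings the $x$-part of the difference to the form $-\sum_{i,j}\int(q^x(u_{i,j})-Q^x_{\imhf,j})(\overline\phi_{\iphf,j}-\overline\phi_{\imhf,j})\Dy\,dt$, and symmetrically in $y$. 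Consistency $Q^x(u,u)=q^x(u)$ together with the local Lipschitz continuity of $Q$ (from Theorem~\ref{thm:tecnoproperties}\textit{(ii)} and the smoothness of $v,\psi$) gives $|q^x(u_{i,j})-Q^x_{\imhf,j}|\leq C|\jmp{u}_{\imhf,j}|$, while the fundamental theorem of calculus gives $|\overline\phi_{\iphf,j}-\overline\phi_{\imhf,j}|\Dy\leq\int_{\cell_{i,j}}|\partial_x\phi|$. A Cauchy--Schwarz estimate then controls $|\langle\xi^\DxDy,\phi\rangle|$ by $\bigl(\int_0^T\sum_{i,j}|\jmp{u}_{\iphf,j}|^2\Dx\Dy\,dt\bigr)^{1/2}\|\phi\|_{H^1(\Omega)}$ plus its $y$-analogue, and the cubic bound \eqref{eq:weakBVboundu}, fed through Hölder's inequality, bounds the first factor by $C\Dx^{1/3}$ (resp.\ $C\Dy^{1/3}$). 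Hence $\|\xi^\DxDy\|_{H^{-1}(\Omega)}\leq C(\Dx^{1/3}+\Dy^{1/3})\to0$.

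With the three ingredients in hand, Murat's Lemma yields precompactness of $\{\mu^\DxDy\}$ in $H^{-1}(\Omega)$, hence in $H^{-1}_\loc$, and Corollary~\ref{cor:PanovI} produces a subsequence $\DxDy'\to0$ with $u^{\DxDy'}\to u$ a.e.\ and in $L^p_\loc$, where $u\in L^\infty$ follows from \eqref{eq:linftybound} and a.e.\ convergence, and global integrability is inherited from that of the data. The main obstacle is the $H^{-1}$ consistency estimate for $\xi^\DxDy$: one must verify that the numerical entropy flux approximates the continuous one in a norm strong enough for Murat's Lemma, and the delicate point is the bookkeeping of powers of $\Dx,\Dy$ -- the single factor of $\Dx$ gained from the test-function increment must overcome the loss incurred in passing from the cubic weak-BV functional \eqref{eq:weakBVboundu} to a quadratic edge sum over a mesh carrying $O(\Dx^{-1}\Dy^{-1})$ cells.
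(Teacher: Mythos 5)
Your proposal is correct and follows essentially the same route as the paper's proof: the identical splitting $\partial_t\eta(u^\DxDy)+\nabla\cdot q(u^\DxDy)=\bigl(\partial_t\eta(u^\DxDy)+\nabla\cdot Q\bigr)+\nabla\cdot\bigl(q(u^\DxDy)-Q\bigr)$, with the first part bounded in measures via Lemma~\ref{lem:entropyresidualbound} together with Theorem~\ref{thm:tecnoproperties}\textit{(iii)}, the second shown to vanish at rate $O(\Dx^{1/3}+\Dy^{1/3})$ in $H^{-1}(\Omega)$ by summation by parts, Lipschitz continuity of $Q$, Cauchy--Schwarz and H\"older against the cubic bound \eqref{eq:weakBVboundu}, and the conclusion obtained from Murat's Lemma~\ref{lm:Murat} and Corollary~\ref{cor:PanovI}, exactly as in the paper. (One immaterial slip in your closing commentary: the smallness comes from the fact that \eqref{eq:weakBVboundu} is weighted by $\Dy$ alone while the quadratic edge sum carries the full weight $\Dx\Dy$ --- the test-function increment only supplies the factor cancelling the discrete divergence's $1/\Dx$ --- but your actual estimate accounts for this correctly.)
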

\begin{proof}
Let $(\eta,q)$ be an arbitrary $C^2$ entropy pair. By Corollary \ref{cor:PanovI} it is sufficient to show that the sequence $\mathcal{E}_{\Dx,\Dy}:=\partial_t\eta(u^\DxDy) + \nabla\cdot q(u^\DxDy)$ is precompact in $H^{-1}_\loc$, and to this end we will employ Murat's lemma. Firstly note that $\mathcal{E}_{\Dx,\Dy}$ is bounded in $W^{-1,\infty}(\R^2\times\R_+)$, by the $L^\infty$ bound on $u^\DxDy$. Decompose
\begin{align*}
\partial_t\eta(u) + \nabla\cdot q(u) &= \underbrace{\partial_t\eta(u) + \nabla\cdot Q}_{=:\,\mathcal{E}^1} + \underbrace{\nabla\cdot q(u) - \nabla\cdot Q}_{=:\,\mathcal{E}^2}
\end{align*}
where $Q$ is given in \eqref{eq:entropyresidualflux}. Note that, due to the $L^\infty$ bound on $u^\DxDy$ and Theorem \ref{thm:tecnoproperties}, also $Q$ is locally Lipschitz continuous. By Lemma \ref{lem:entropyresidualbound} and Theorem \ref{thm:tecnoproperties}\textit{(iii)}, the discrete entropy production $\mathcal{E}^1$ is bounded in the space of measures $\mathcal{M}(\R^d\times\R_+)$.

Now to show that $\mathcal{E}^2$ is precompact in $H^{-1}_\loc(\R^2\times[0,T])$, let $\Omega\subset\R^2\times[0,T]$ be open and bounded and let $\phi\in H^1_0(\Omega)$. Extending $\phi$ by zero outside $\Omega$, we get
\begin{align*}
\mathcal{E}^2(\phi) &= \int_\Omega \phi\,d\big(\nabla\cdot q(u) - \nabla\cdot Q\big) \,d(x,y,t) \\ 
\intertext{(\textit{cf.~\eqref{eq:discreteEntropyFluxDef}})}
&= \int_0^T \sum_{i,j} \overline{\phi}_{\iphf,j}\big(q^x(u_{i+1,j})-q^x(u_{i,j})\big)\Dy + \overline{\phi}_{i,\jphf}\big(q^y(u_{i,j+1})-q^y(u_{i,j})\big)\Dx\,dt \\
&\quad-\int_0^T\sum_{i,j\in\Z}\overline{\phi}_{\iphf,j}\frac{Q_{\iphf,j}^x-Q_{\imhf,j}^x}{\Dx}+\overline{\phi}_{i,\jphf}\frac{Q_{i,\jphf}^y-Q_{i,\jmhf}^y}{\Dy}\,dt\Dx\Dy \\
\intertext{(\textit{summation by parts})}
&= \int_0^T \sum_{i,j} \frac{\overline{\phi}_{\iphf,j}-\overline{\phi}_{\imhf,j}}{\Dx}\big(Q_{\imhf,j}^x-q^x(u_{i,j})\big)\,dt\Dx\Dy \\
&\quad + \int_0^T \sum_{i,j} \frac{\overline{\phi}_{i,\jphf}-\overline{\phi}_{i,\jmhf}}{\Dy}\big(Q_{i,\jmhf}^y-q^y(u_{i,j})\big)\,dt\Dx\Dy \\
\intertext{(\textit{letting $\mathcal{I}=\{(i,j):\Omega\cap\cell_{i,j}\neq\emptyset\}$})}
&\leq \|\partial_x\phi\|_{L^2(\Omega)} \Bigg(\int_0^T \sum_{(i,j)\in\mathcal{I}} \big|Q_{\imhf,j}^x-q^x(u_{i,j})\big|^2 \Dx \Dy \,dt\Bigg)^{\frac{1}{2}} \\ 
& \quad + \|\partial_y\phi\|_{L^2(\Omega)} \Bigg(\int_0^T \sum_{(i,j)\in\mathcal{I}} \big|Q_{i,\jmhf}^y-q^y(u_{i,j})|^2 \Dx \Dy \,dt\Bigg)^{\frac{1}{2}} \\ 
\intertext{(\textit{by Lipschitz continuity of $Q$})}
&\leq C \|\phi\|_{H^1(\Omega)} \Bigg[\Bigg(\int_0^T \sum_{(i,j)\in\mathcal{I}}|\jmp{u}_{\iphf,j}|^2 \Dx \Dy \,dt\Bigg)^{\frac{1}{2}} + \Bigg(\int_0^T \sum_{(i,j)\in\mathcal{I}}|\jmp{u}_{i,\jphf}|^2 \Dx \Dy \,dt\Bigg)^{\frac{1}{2}} \Bigg] \\ 
&\leq C \|\phi\|_{H^1(\Omega)} |\Omega|^{\frac32} \Bigg[ \Bigg(\int_0^T \sum_{i,j}|\jmp{u}_{\iphf,j}|^3 \Dx \Dy \,dt\Bigg)^{\frac13} + \Bigg(\int_0^T \sum_{i,j}|\jmp{u}_{i,\jphf}|^3 \Dx \Dy \,dt\Bigg)^{\frac13} \Bigg] \\ 
& \to 0
\end{align*}
where the last step follows from \ref{eq:weakBVbound}. Thus by invoking Murat's Lemma \ref{lm:Murat} we can conclude that the sequence $(\mathcal{E}_{\Dx,\Dy})_{\Dx,\Dy>0}$ is precompact in $H^{-1}_\loc(\R^2 \times [0,T])$. Applying Corollary \ref{cor:PanovI} then yields the desired result.
\end{proof}

Now we need to show that this limit function $u$ is indeed a weak solution of \eqref{eq:SCLII}. To do so we state and prove the following ``Lax--Wendroff result''.

\begin{lemma}\label{lem:LaxWendroff}
Under the same assumptions as in Lemma \ref{lm:precompactness}, the limit $u$ is a weak solution of \eqref{eq:SCLII}.
\end{lemma}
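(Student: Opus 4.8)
The plan is to carry out a standard Lax--Wendroff argument, using the strong convergence $u^{\DxDy'}\to u$ from Lemma \ref{lm:precompactness} together with the consistency and local Lipschitz continuity of the numerical flux (Theorem \ref{thm:tecnoproperties}\textit{(ii)}) and the cubic weak BV bound \eqref{eq:weakBVboundu}. Fix a test function $\phi\in C_c^1(\R^2\times[0,\infty))$, supported in $B\times[0,T)$ for some ball $B$ and time $T$. I would multiply the semi-discrete scheme \eqref{eq:SCLFVMI} by $\phi(x_i,y_j,t)\Dx\Dy$, sum over $i,j\in\Z$ and integrate over $t\in(0,\infty)$. Since the scheme is continuous in time, integrating by parts in $t$ in the term containing $\frac{d}{dt}u_{i,j}$ is exact and produces $-\sum_{i,j}\int_0^\infty u_{i,j}\,\partial_t\phi(x_i,y_j,t)\,dt\,\Dx\Dy$ together with the boundary contribution $-\sum_{i,j}u_{i,j}(0)\phi(x_i,y_j,0)\Dx\Dy$ at $t=0$ (the contribution at $t=\infty$ vanishes by compact support). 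Summation by parts in space then moves the flux differences onto $\phi$, yielding $-\sum_{i,j}\int_0^\infty F^x_{\iphf,j}\frac{\phi(x_{i+1},y_j,t)-\phi(x_i,y_j,t)}{\Dx}\,dt\,\Dx\Dy$ and the analogous $y$-term.

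Having rewritten the identity in this ``weak'' form, I would pass to the limit $\DxDy'\to0$ term by term. The time term $-\sum_{i,j}\int_0^\infty u_{i,j}\,\partial_t\phi(x_i,y_j,t)\,dt\,\Dx\Dy$ is a Riemann sum converging to $-\iint u\,\partial_t\phi$, by the $L^1_\loc$ convergence of $u^{\DxDy'}$ and the continuity and boundedness of $\partial_t\phi$. The initial-data term converges to $-\int_{\R^2}u_0\,\phi(\cdot,0)$ because the sampled data $u_{i,j}(0)$ are the cell averages of $u_0$, whose piecewise-constant reconstruction converges to $u_0$ in $L^1_\loc$. For the flux terms the difference quotients $\frac{\phi(x_{i+1},y_j,t)-\phi(x_i,y_j,t)}{\Dx}$ converge uniformly to $\partial_x\phi$ (and similarly in $y$); I would split $F^x_{\iphf,j}=f^x(u_{i,j})+\bigl(F^x_{\iphf,j}-f^x(u_{i,j})\bigr)$, so that the leading part is a Riemann sum converging to $\iint f^x(u)\,\partial_x\phi$ by continuity of $f$, the $L^\infty$ bound \eqref{eq:linftybound} and dominated convergence.

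The one part requiring genuine input from the scheme's structure---and the step I expect to be the main obstacle---is showing that the flux consistency error vanishes. Using consistency $F^x(u,u)=f^x(u)$ and local Lipschitz continuity, $|F^x_{\iphf,j}-f^x(u_{i,j})|=|F^x(u_{i,j},u_{i+1,j})-F^x(u_{i,j},u_{i,j})|\leq C|\jmp{u}_{\iphf,j}|$, so the error term is bounded by $C\|\partial_x\phi\|_{L^\infty}\int_0^T\sum_{(i,j)\in\mathcal{I}}|\jmp{u}_{\iphf,j}|\,\Dx\Dy\,dt$, where $\mathcal{I}$ indexes the cells meeting $\supp\phi$. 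Since no genuine (first-order) BV bound is available, I would estimate this by Hölder's inequality with exponents $3$ and $3/2$ against the measure $\Dx\Dy\,dt$ on $\mathcal{I}\times(0,T)$, obtaining a bound by $C|\Omega|^{2/3}\bigl(\int_0^T\sum_{i,j}|\jmp{u}_{\iphf,j}|^3\Dx\Dy\,dt\bigr)^{1/3}$; by the cubic weak BV bound \eqref{eq:weakBVboundu} the last factor is $O(\Dx^{1/3})\to0$ (and likewise for the $y$-flux error). Collecting the limits of all terms then yields exactly the weak formulation \eqref{eq:WeakFormulation}, proving that $u$ is a weak solution.
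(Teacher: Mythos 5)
Your proposal is correct and follows essentially the same route as the paper's proof: multiply the scheme \eqref{eq:SCLFVMI} by the test function, sum/integrate by parts, pass to the limit in the consistent Riemann-sum parts using the strong convergence from Lemma \ref{lm:precompactness}, and kill the flux consistency error by H\"older with exponents $3$ and $3/2$ against the cubic weak BV bound \eqref{eq:weakBVboundu} (the paper does exactly this, except that it splits $F$ into the entropy-conservative part $\tilde{F}$ and the diffusion part $D\jmpr{u}$, treating them as separate $A$- and $B$-terms).

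One imprecision to fix: the full TECNO flux \eqref{eq:tecnoflux} is \emph{not} a two-point flux --- the second-order ENO reconstruction jump $\jmpr{u}_{\iphf,j}$ depends on a wider stencil of cell values --- so your identity $F^x_{\iphf,j}=F^x(u_{i,j},u_{i+1,j})$ and the ensuing two-point Lipschitz estimate are not literally valid. The bound you actually need, $\bigl|F^x_{\iphf,j}-f^x(u_{i,j})\bigr|\leq C\bigl|\jmp{u}_{\iphf,j}\bigr|$, does hold, but the correct justification is Lipschitz continuity of the genuinely two-point flux $\tilde{F}$ together with the ENO upper bound \eqref{eq:ENOupperbound}, which gives $\bigl|\jmpr{u}_{\iphf,j}\bigr|\leq C_p\bigl|\jmp{u}_{\iphf,j}\bigr|$; this is precisely why the paper handles the diffusion contribution separately. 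Finally, you verify the initial-data term of \eqref{eq:WeakFormulation} explicitly (via $L^1_\loc$ convergence of the cell averages of $u_0$), whereas the paper sidesteps it by taking $\phi\in C^1_c(\R^2\times(0,T))$; on that point your argument is, if anything, slightly more complete relative to the stated definition of weak solution.
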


\begin{proof}
Let $\phi \in C^1_c(\R^2 \times (0,T))$ be a test function and select a compact set $K_x \times K_y \subset \R^2$ such that $\supp\phi\subset K_x\times K_y\times[0,T]$. Furthermore, denote 
\[
\phi_{i,j}(t)=\phi(x_i,y_j,t), \qquad \phi^\DxDy(x,y,t) = \sum_{i,j} \phi_{i,j}(t) \chi_{\cell_{i,j}}(x,y).
\]
Multiplying the numerical scheme \eqref{eq:SCLFVMI} by $\phi_{i,j}(t)$ and integrating/summing in time/space, we get
\begin{align*}
& 0 = \int_0^T \Dx \Dy \sum_{i,j} \bigg(\phi_{i,j} \frac{d}{dt} u_{i,j}^\DxDy + \phi_{i,j} \frac{\tilde{F}^x_{\iphf,j} - \tilde{F}^x_{\imhf,j}}{\Dx} + \phi_{i,j} \frac{\tilde{F}^y_{i,\jphf} - \tilde{F}^y_{i,\jmhf}}{\Dy} \\ 
& \qquad - \phi_{i,j} \frac{D^{x}_{\iphf,j} \jmpr{u}_{\iphf,j} - D^{x}_{\imhf,j} \jmpr{u}_{\imhf,j}}{\Dx} \\ 
& \qquad - \phi_{i,j} \frac{D^{y}_{i,\jphf} \jmpr{u}_{i,\jphf} - D^{y}_{i,\jmhf} \jmpr{u}_{i,\jmhf}}{\Dy} \bigg) \,dt.
\end{align*}
After performing integration and summation by parts for temporal and spatial variables respectively we get
\begin{equation}\label{eq:LaxWendroffI}
A^1+A^2+A^3+ B^1+B^2 = 0
\end{equation}
where we can write
\begin{align*}
A^1 &:= -\int_0^T \Dx \Dy \sum_{i,j} u_{i,j} \frac{d}{dt} \phi_{i,j} \,dt\\
A^2 &:= -\int_0^T \Dx \Dy \sum_{i,j} \tilde{F}^x_{\iphf,j} \frac{\phi_{i+1,j} - \phi_{i,j}}{\Dx}\,dt \\
A^3 &:= -\int_0^T \Dx \Dy \sum_{i,j} \tilde{F}^y_{i,\jphf} \frac{\phi_{i,j+1} - \phi_{i,j}}{\Dy}\,dt \\ 
B^1 &:= \int_0^T\Dx\Dy\sum_{i,j} D^{x}_{\iphf,j} \jmpr{u}_{\iphf,j} \frac{\phi_{i+1,j} - \phi_{i,j}}{\Dx}\,dt \\
B^2 &:= \int_0^T\Dx\Dy\sum_{i,j} D^{y}_{i,\jphf} \jmpr{u}_{i,\jphf} \frac{\phi_{i,j+1} - \phi_{i,j}}{\Dy} \,dt.
\end{align*}
We can write $A^1=-\int_0^T\int_\R\int_\R u^\DxDy\partial_t\phi^\DxDy\,dx\,dy\,dt$, and thanks to the convergence of $u^\DxDy$ to $u$ from Lemma \ref{lm:precompactness} and convergence of $\phi^\DxDy$ to $\phi$ a.e., we have $\lim_{\Dx,\Dy\to0}A^1=- \int_0^T \int_{\R^2} u \partial_t\phi \,dx \,dy \,dt$. 

For the second term $A^2$, we denote for the sake of simplicity $\Delta_x \psi(x,y,t) = \frac{\psi(x+\Dx,y,t) - \psi(x,y,t)}{\Dx}$, for any function $\psi$. Since $\tilde{F}^x$ is a two-point flux, we can write
\begin{align*}\label{eq:LaxWendroffII}
A^2 &= -\int_0^T \int_{\R^2} \tilde{F}^x\big(u^\DxDy (x,y,t),\, u^\DxDy(x+\Dx,y,t)\big) \Delta_x \phi^\DxDy(x,y,t) \,d(x,y) \, dt \\ 
&= A^{2,1}+A^{2,2},
\end{align*}
where
\begin{align*}
A^{2,1} &:= -\int_0^T \int_{\R^2} f^x\big(u^\DxDy(x,y,t)\big) \Delta_x \phi^\DxDy(x,y,t) \,d(x,y) \,dt, \\ 
A^{2,2} &:= \int_0^T \int_{\R^2} \Big(f^x\big(u^\DxDy(x,y,t)\big) - \tilde{F}^x\big(u^\DxDy (x,y,t), u^\DxDy(x+\Dx,y,t)\big)\Big)\Delta_x \phi^\DxDy(x,y,t) \,d(x,y) \, dt.
\end{align*}
Thanks to the convergence of $u^\DxDy$ from Lemma \ref{lm:precompactness} and the a.e.~convergence of $\phi^\DxDy$ to $\phi$ we have
\begin{equation}\label{eq:LaxWendroffIII}
A^{2,1} \to - \int_0^T \int_{\R^2} f^x(u) \partial_x\phi \,d(x,y) \,dt \qquad \text{as } \DxDy\to0. 
\end{equation}
For the term $A^{2,2}$ we apply the H\"{o}lder inequality and Lemma \ref{lm:precompactness} to get
\begin{align*}
|A^{2,2}| &\leq \int_0^T \int_{\R^2} \Big|\tilde{f}^x\big(u^\DxDy (x,y,t)\big) - \tilde{F}^x\big(u^\DxDy(x,y,t), u^\DxDy(x+\Dx,y,t)\big)\Big| \big|\Delta_x\phi^\DxDy(x,y,t)\big| \,d(x,y) \, dt \\
\intertext{(\textit{using Lipschitz continuity of $\tilde{F}^x$})} 
& \leq C \int_0^T \int_{\R^2} \big|u^\DxDy (x+\Dx,y,t) - u^\DxDy(x,y,t)\big| \big|\Delta_x \phi^\DxDy(x,y,t)\big| \,d(x,y) \, dt \\ 
& \leq C \bigg(\int_0^T \int_{\R^2} \big|\Delta_x\phi^{\DxDy}\big|^{\frac32} \,d(x,y) \,dt \bigg)^{\frac23} \bigg(\int_0^T \sum_{i,j} \big|\jmp{u}_{\iphf,j}\big|^3 \Dx \Dy\,dt\bigg)^{\frac13} \\ 
& \leq C \|\partial_x \phi \|_{L^{\frac32}(\R^2 \times [0,T))} \bigg(\int_0^T \sum_{i,j} \big|\jmp{u}_{\iphf,j}\big|^3 \Dx \Dy\,dt\bigg)^{\frac13} \\ 
& \to 0
\end{align*}
as $\Dx, \Dy \to 0$ by \eqref{eq:weakBVboundu}. Analogously, $A^3 \to - \int_0^T \int_{\R^2} f^y(u) \partial_y \phi \,dx \,dy \,dt$ as $\Dx,\Dy\to0$. We conclude that 
\begin{equation}\label{eq:LaxWendroffIV}
A \to - \int_0^T \int_{\R^2} \Big[ u \partial_t\phi + f(u) \cdot \nabla \phi \Big] \,d(x,y) \,dt \qquad \text{as } \Dx, \Dy \to 0.
\end{equation}
It remains to show that $B^1,B^2$ in \eqref{eq:LaxWendroffI} vanish as $\Dx,\Dy\to0$. Indeed,
\begin{align*}
|B^1| &\leq \overline{D}\int_0^T\Dx\Dy\sum_{i,j} \big|\jmpr{u}_{\iphf,j}\big| \bigg|\frac{\phi_{i+1,j} - \phi_{i,j}}{\Dx}\bigg|\,dt \\
\intertext{(\textit{by \eqref{eq:ENOupperbound}})}
&\leq C\overline{D}\|\eta''\|_{L^\infty(\R)}\int_0^T\Dx\Dy\sum_{i,j} \big|\jmp{u}_{\iphf,j}\big| \bigg|\frac{\phi_{i+1,j} - \phi_{i,j}}{\Dx}\bigg|\,dt \\
&\leq C\overline{D}\|\eta''\|_{L^\infty(\R)}\|\partial_x\phi\|_{L^{3/2}(\R^2\times[0,T])} \left(\int_0^T\Dx\Dy\sum_{i,j} \big|\jmp{u}_{\iphf,j}\big|^3\,dt\right)^{1/3} \\
&\to 0
\end{align*}
as $\Dx,\Dy\to0$, and likewise for $B^2$. This completes the proof.
\end{proof}

Although we are not able to show that the TECNO scheme converges to the entropy solution, we will show that the weak solution $u$ satisfies at least one of the entropy conditions.
\begin{lemma}\label{lem:entropyconsistency}
With the same assumptions as in Lemma \ref{lm:precompactness}, the limit $u$ satisfies
\begin{equation}\label{eq:EntropyStableLW}
\partial_t\tilde{\eta}(u) + \nabla \cdot \tilde{q}(u) \leq 0.
\end{equation}
\end{lemma}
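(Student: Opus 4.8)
The plan is to run a Lax--Wendroff argument on the \emph{discrete entropy inequality} rather than on the scheme itself, using two facts. First, by Theorem \ref{thm:tecnoproperties}\textit{(i)} the scheme is entropy stable with respect to $\tilde\eta(u)=u^2/2$, so by Definition \ref{def:entrstab} the cell values satisfy the pointwise-in-time inequality
\[
\frac{d}{dt}\tilde\eta(u_{i,j}) + \frac{\tilde Q^x_{\iphf,j}-\tilde Q^x_{\imhf,j}}{\Dx} + \frac{\tilde Q^y_{i,\jphf}-\tilde Q^y_{i,\jmhf}}{\Dy} \leq 0,
\]
where $\tilde Q$ is the numerical entropy flux of \eqref{eq:entropyresidualflux} specialized to $v=u$. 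Second, because we only need an inequality, we may test against a \emph{nonnegative} $\phi$ and therefore never have to track the dissipation on the right-hand side. Concretely, I would fix $0\leq\phi\in C^1_c(\R^2\times(0,T))$, set $\phi_{i,j}(t)=\phi(x_i,y_j,t)$, multiply the inequality by $\phi_{i,j}(t)\geq0$, sum over $i,j$ and integrate over $[0,T]$; since $\phi_{i,j}\geq0$ the inequality direction is preserved.

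Next I would perform exactly the integration-by-parts in time and summation-by-parts in space of Lemma \ref{lem:LaxWendroff}. This moves the time derivative onto $\phi$, producing $-\int_0^T\int_{\R^2}\tilde\eta(u^\DxDy)\partial_t\phi^\DxDy$, which converges to $-\int_0^T\int_{\R^2}\tilde\eta(u)\partial_t\phi$ by the strong convergence $u^\DxDy\to u$ of Lemma \ref{lm:precompactness} and continuity of $\tilde\eta$, and it turns the flux differences into $-\int_0^T\Dx\Dy\sum_{i,j}\tilde Q^x_{\iphf,j}\frac{\phi_{i+1,j}-\phi_{i,j}}{\Dx}\,dt$ and its $y$-analogue. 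The crux is the limit of these flux terms. I would split $\tilde Q^x_{\iphf,j}=\tilde Q^{x,\mathrm{EC}}_{\iphf,j}-\avg{u}_{\iphf,j}D^x_{\iphf,j}\jmpr{u}_{\iphf,j}$, where $\tilde Q^{x,\mathrm{EC}}=\avg{u}\tilde F^x-\avg{\tilde\psi^x}$ is a genuine two-point flux, consistent with $\tilde q^x$ and locally Lipschitz (since $\tilde F$ is Lipschitz and $\tilde\psi\in C^1$). Writing $\tilde Q^x_{\iphf,j}=\tilde q^x(u_{i,j})+R_{\iphf,j}$, the consistency and Lipschitz continuity of $\tilde Q^{x,\mathrm{EC}}$ together with the ENO bound \eqref{eq:ENOupperbound} applied to the diffusion term give the uniform estimate $|R_{\iphf,j}|\leq C|\jmp{u}_{\iphf,j}|$. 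The main part $\tilde q^x(u^\DxDy)$ paired against the discrete $x$-derivative of $\phi$ converges to $-\int_0^T\int_{\R^2}\tilde q^x(u)\partial_x\phi$, again by Lemma \ref{lm:precompactness}, while the remainder is controlled exactly as the terms $A^{2,2}$ and $B^1$ in Lemma \ref{lem:LaxWendroff}: by H\"older with exponents $3$ and $3/2$,
\[
\Bigl|\int_0^T\Dx\Dy\sum_{i,j}R_{\iphf,j}\tfrac{\phi_{i+1,j}-\phi_{i,j}}{\Dx}\,dt\Bigr| \leq C\|\partial_x\phi\|_{L^{3/2}}\Bigl(\int_0^T\sum_{i,j}|\jmp{u}_{\iphf,j}|^3\Dx\Dy\,dt\Bigr)^{1/3}\longrightarrow 0
\]
by the weak-BV bound \eqref{eq:weakBVboundu}. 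Treating the $y$-flux identically and collecting all limits yields $\int_0^T\int_{\R^2}\bigl[\tilde\eta(u)\partial_t\phi+\tilde q(u)\cdot\nabla\phi\bigr]\,d(x,y)\,dt\geq0$ for every $\phi\geq0$, which is precisely the distributional form of \eqref{eq:EntropyStableLW}.

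The only place where the TECNO structure (as opposed to a plain entropy stable two-point scheme) intervenes is that $\tilde Q$ is \emph{not} a two-point flux: its diffusive part carries the ENO reconstruction jump $\jmpr{u}_{\iphf,j}$, which depends on a wide stencil, so the naive two-point Lipschitz argument does not directly apply. I expect this to be the main obstacle, and the remedy is precisely the $O(1)$ bound \eqref{eq:ENOupperbound} of $\jmpr{u}$ by $\jmp{u}$, which subsumes the diffusion into the remainder $R$ and lets the $\ell^3$ weak-BV bound absorb it. Beyond this bookkeeping no new compactness is required, since the inequality sign is delivered for free by entropy stability and the sign of $\phi$.
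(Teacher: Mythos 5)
Your proposal is correct and takes essentially the same route as the paper: the paper likewise writes the discrete entropy balance \eqref{eq:EntropyStableLWdisc} with nonpositive residuals $r$, multiplies by a nonnegative test function, and passes to the limit ``in the same manner as in Lemma \ref{lem:LaxWendroff}''. Your explicit splitting of $\tilde{Q}$ into a two-point entropy-conservative part plus a diffusive part controlled by \eqref{eq:ENOupperbound} and \eqref{eq:weakBVboundu} merely spells out the details the paper leaves implicit in that cross-reference.
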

\begin{proof}
As in \eqref{eq:SCLFVMentropy} in Lemma \ref{lem:entropyresidualbound} we can write
\begin{equation}\label{eq:EntropyStableLWdisc}
\frac{d}{dt}\tilde{\eta}(u_{i,j}) + \frac{\tilde{Q}^x_{\iphf,j} - \tilde{Q}^x_{\imhf,j}}{\Dx} + \frac{\tilde{Q}^y_{i,\jphf} - \tilde{Q}^y_{i,\jmhf}}{\Dy} = \frac{r^{x}_{\iphf,j} + r^{x}_{\imhf,j}}{2\Dx} + \frac{r^{y}_{i,\jphf} + r^{y}_{i,\jmhf}}{2\Dy}
\end{equation}
where, in this particular case, the entropy residuals $r$ on the right-hand side are all nonpositive (see e.g.~\cite{TadmorI,UlrikII,UlrikIII}). Multiplying the above by a nonnegative test function $\phi \in C^1_c(\R^2 \times (0,T))$ and proceeding in the same manner as in Lemma \ref{lem:LaxWendroff} we obtain \eqref{eq:EntropyStableLW} in the sense of distribution.
\end{proof}

\section{Conclusions and outlook}
We prove convergence of the second-order TECNO scheme in two space dimensions to a weak solution of the hyperbolic conservation law \eqref{eq:SCLI}; this can easily be generalized to any number of space dimensions. The proof of this result relies on estimating the entropy residual appropriately using the (weak) TV bound obtained from entropy stability with respect to \textit{one} entropy. Invoking this estimate, precompactness of the sequence of approximate solutions is shown using a corollary derived from a \textit{compensated compactness} result due to Panov. Finally, to show that the limit function obtained due to the precompactness property is indeed a weak solution of \eqref{eq:SCLI}, a ``Lax--Wendroff'' type argument is used.

Convergence proofs of \textit{even higher-order} (i.e. more than second order) TECNO scheme in multiple space dimensions, to a weak solution of the equation \eqref{eq:SCLI} are still unanswered. In our opinion, this is largely due to the unavailability of \textit{weak} TV estimates of the type \eqref{eq:weakBVboundu}, as well as an appropriate version of Lemma \ref{lem:entropyresidualbound}. This should be an object of interest for future research. Last, but not least, one key estimate to prove \eqref{eq:weakBVboundv}, and consequently \eqref{eq:weakBVboundu}, is \eqref{eq:enoconjecture}. For \textit{even higher-order} ENO reconstruction, this estimate \eqref{eq:enoconjecture} (the ``ENO-conjecture'') is still not established and is still an open problem.

\bibliography{Reference}
\bibliographystyle{amsplain}
\end{document}